\newfont{\bb}{msbm10}
\newtheorem{theorem}{Theorem}[section]
\newtheorem{lemma}{Lemma}[section]
\newtheorem{remark}{Remark}[section]
\numberwithin{equation}{section}
\title{OMEGA THEORMES FOR LOGARITHMIC DERIVATIVES OF ZETA AND $L$-FUNCTIONS NEAR THE 1-LINE}
\author{
Zhonghua Li\\
School of Mathematical Sciences \\
Key Laboratory of Intelligent Computing and Applications(Ministry of Education) \\
Tongji University, Shanghai 200092, CHINA\\
Email: zhonghua-li@tongji.edu.cn\\
and \\
Shengbo Zhao\footnote{Corresponding Author}\\
School of Mathematical Sciences \\
Key Laboratory of Intelligent Computing and Applications(Ministry of Education) \\
Tongji University, Shanghai 200092, CHINA\\
Email: shengbozhao@hotmail.com\\
}
\date{}
\begin{document}
\cleardoublepage \pagestyle{myheadings}
\maketitle

\begin{abstract}
	We establish an omega theorem for logarithmic derivative of the Riemann zeta function near the 1-line by resonance method. We show that the inequality $\left| \zeta^{\prime}\left(\sigma_A+it\right)/\zeta\left(\sigma_A+it\right) \right| \geqslant \left(\left(e^A-1\right)/A\right)\log_2 T + O\left(\log_2 T / \log_3 T\right)$ has a solution $t \in [T^{\beta}, T]$ for all sufficiently large $T,$ where $\sigma_A = 1 - A / \log_2 {T}.$
	Furthermore, we give a conditional lower bound for the measure of the set of $t$ for which the logarithmic derivative of the Riemann zeta function is large.
	Moreover, similar results can be generalized to Dirichlet $L$-functions.
\end{abstract}

\noindent{\bf Keywords.}\ Omega theorems, Riemann zeta function, Dirichlet $L$-functions, Resonance methods.

\bigskip

\section{Introduction}
\label{Introduction}
Let $\zeta \left(s \right)$ be the Riemann zeta function and $s = \sigma + it,$ where $\sigma$ and $t$ are real numbers. The logarithmic derivative of the Riemann zeta function is one of the significant objects in the analytic number theory, 
as it plays a key role in the proof of the prime number theorem. 
The study of it can be traced back to the 1910s. In 1911, Landau \cite{landau1911theorie} proved that 
there exists a positive constant $k_1$ such that $\left| \zeta^{\prime}\left(s\right)/\zeta\left(s\right) \right| \geqslant k_1 \log_2 t$ has a solution $s= \sigma + i t $ in $\{\sigma+it \operatorname{:} \sigma>1, t> \tau \}$ for any given $\tau >0,$ where $\log_2 {t} \operatorname{:}= \log{\log{t}}. $
In 1913, this result had been improved by Bohr and Landau \cite{bohr1913beitrage}. They proved that 
there exists a positive constant $k_2$ such that $\left| \zeta^{\prime}\left(s\right)/\zeta\left(s\right) \right| \geqslant \left(\log {t}\right)^{k_2 \theta } $ has a solution $s= \sigma + i t $ in  $\{\sigma+it \operatorname{:} \sigma \geqslant 1- \theta , t \geqslant \tau \}$ for any given $\theta \in \left(0,\frac{1}{2}\right)$ and for any given $\tau >0.$
\par
Assuming the Riemann hypothesis(RH), Littlewood \cite{littlewood1926riemann} showed the classical upper bound
$$\left| \frac{\zeta^{\prime}}{\zeta}\left(\sigma+it\right)\right| \leqslant \sum_{n \leqslant \left(\log{t}\right)^ 2} \frac{\Lambda \left(n\right)}{n^{\sigma}}+ O\left(\left(\log{t}\right)^{2-2\sigma}\right)$$
holds uniformly for $\frac{1}{2}+ 1/\log_2 t \leqslant \sigma \leqslant \frac{3}{2}$ and $ t \geqslant 10.$ Until now, this bound has not been improved significantly, especially its main terms. 
When assuming RH, Chirre, Val$\rm\mathring{a}$s Hagen and Simoni$\rm \check{c}$ \cite{chirre2024conditional} gave the following conditional upper bound on the 1-line
$$\left| \frac{\zeta^{\prime}}{\zeta}\left(1+it\right) \right| \leqslant 2\log_2 t - 0.4989 + 5.35\frac{\left(\log_2 t\right)^2}{\log{t}}, ~ \forall t \geqslant 10^{30}.$$
\par
In 2006, Ihara \cite{ihara2006euler} focused on the study of the Euler-Kronecker constants of the cyclotomic fields.
Values of $L^{\prime}\left(1, \chi\right)/L\left(1, \chi\right)$ appear naturally in this study, and there is a certain correlation between them.
This provides new motivation to study the logarithmic derivative of Dirichlet $L$-functions.
\par
Assuming the Generalized Riemann hypothesis(GRH), Ihara, Kumar Murty and Mahoro Shimura \cite{IharaMurtyShimura2009} obtained the following conditional upper bound
$$\left| \frac{L^{\prime}}{L}\left(1,\chi\right) \right| \leqslant 2\log_2 q + 2\left(1-\log{2}\right) + O\left(\frac{\log_2 q}{\log{q}}\right)$$
for all non-principal primitive characters $\chi \left(\operatorname{mod}~q\right).$ Later, their result has been optimized greatly.
For instance, assuming GRH, Chirre, Val$\rm\mathring{a}$s Hagen and Simoni$\rm \check{c}$ \cite{chirre2024conditional} established the conditional upper bound as follows
$$\left| \frac{L^{\prime}}{L}\left(1,\chi\right) \right| \leqslant 2\log_2 q - 0.4989 +5.91\frac{\left(\log_2 q\right)^2}{\log{q}}, ~ \forall q \geqslant 10^{30},$$
where $\chi $ is a primitive character modulo $q.$ Note that $-0.4989 < 2\left(1 - \log{2}\right) = 0.6137 \cdots .$
In fact, in the past years, there were many results on upper bounds of logarithmic derivatives of Dirichlet $L$-functions. 
For instance, we can see \cite{PalojärviSimonič2022,Simonič2023,Trudgian2015}.
\par
There are much fewer omega results, in other words, lower bounds of maximal values of logarithmic derivatives of $\zeta\left(s\right)$ and $L\left(s, \chi\right).$
Mourtada and Kumar Murty \cite{mourtada2013omega} proved that there are inﬁnitely many fundamental discriminants $D,$ regardless of whether they are positive or negative, such that
$L^{\prime}\left(1, \chi_D\right)/L\left(1,\chi_D\right) \geqslant \log_2 \left|D\right| +O\left(1\right),$ where $\chi_D$ is the quadratic Dirichlet character of conductor $D.$
The key point in proving this result is making use of the explicit formula in \cite{ihara2009logarithmic}. Furthermore, assuming GRH, they also proved that for $x$ large enough, 
there are $\gg x^{\frac{1}{2}}$ primes $q \leqslant x$ such that $- L^{\prime}\left(1, \chi_q\right)/L\left(1,\chi_q\right) \geqslant \log_2 x + \log_3 x +O\left(1\right),$ where $\log_3 x \operatorname{:}= \log {\log {\log {x}}}.$
\par
In 2023, Paul \cite{paul2023values} gave that if $f \in S_k \left(\operatorname{SL_2 \left(\mathbb{Z} \right)}\right)$ is a normalized Hecke eigenform, 
then there are inﬁnitely many fundamental discriminants $D$ and quadratic Dirichlet characters $\chi \left(\operatorname{mod}~ D\right)$ such that
$L^{\prime}/L \left(1, f, \chi\right) \geqslant \frac{1}{8} \log_2 \left(Dk\right) + O\left(1\right),$
where we denote $L\left(s, f, \chi\right)$ as Dirichlet $L$-functions associated by $f$ and $\chi \left(\operatorname{mod}~ D\right).$
\par
Yang \cite{yang2023omega} established several omega theorems for any $\sigma \in (\frac{1}{2}, 1],$ not just limited to the case $\sigma = 1.$ Fix $\beta \in \left(0,1\right),$ he showed that
$$\max_{T^{\beta} \leqslant t \leqslant T} - \operatorname{Re} \frac{\zeta^{\prime}}{\zeta} \left(1+it\right) \geqslant \log_2 T +\log_3 T + C_1 - \epsilon,$$
where $\epsilon \in \left(0,1 \right)$ and $C_1$ is an explicit constant.
In addition, when $\sigma \in \left(\frac{1}{2}, 1\right),$ he showed that
$$\max_{T^{\beta} \leqslant t \leqslant T} - \operatorname{Re} \frac{\zeta^{\prime}}{\zeta} \left(\sigma+it\right) \geqslant C_3\left(\sigma\right) \left(\log{T}\right)^{1-\sigma}\left(\log_2 T\right)^{1-\sigma},$$
where $C_3 \left(\sigma\right)$ is a constant. Similar results on Dirichlet $L$-functions had been given as well.
\par
We take inspiration from Yang's exceptional work and harvest some similar results. We focus on omega theorems for logarithmic derivatives of $\zeta \left(s\right)$ and $L\left(s, \chi \right)$ near the 1-line. 
Specifically, we set $\sigma_A = 1- A/ \log_2 T$ for all sufficiently large $T$ when we concentrate on the Riemann zeta function. Naturally, we set $\sigma_A = 1- A/ \log_2 q$ as $q \to \infty$ when we consider Dirichlet $L$-functions.
To be honest, we can shift the proof of the theorems about $\zeta\left(s\right)$ to the conclusions on $L\left(s, \chi\right)$, as the methods of getting them are roughly same.
\par
We have the following result for the logarithmic derivative of the Riemann zeta function near the 1-line.
\begin{theorem}
	\label{thm1}
	Fix $\beta \in \left(0,1\right).$ Let $A$ be any positive real number, $\sigma_{A} \operatorname{:} = 1-A/\log_2 T.$ Then for all sufficiently large $T$, we have 
	$$\max_{T^{\beta} \leqslant t \leqslant T} - \operatorname{Re} \frac{\zeta ^{\prime} }{\zeta }\left( \sigma_{A} + it\right) \geqslant \frac{e^{A} -1}{A} \log_2 T + O\left(\frac{\log_2 {T}}{\log_3 {T}}\right) .$$
    In particular, for all sufficiently large $T,$ we have
	$$\max_{T^{\beta} \leqslant t \leqslant T} \left| \frac{\zeta ^{\prime} }{\zeta }\left( \sigma_{A} + it\right) \right| \geqslant \frac{e^{A} -1}{A} \log_2 T + O\left(\frac{\log_2 {T}}{\log_3 {T}}\right) .$$
\end{theorem}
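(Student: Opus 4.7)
The approach will be the resonance method, as used by Yang and earlier authors. The plan is to construct a nonnegative Dirichlet-polynomial weight $|R(t)|^{2}\Phi(t)$ on $[T^{\beta},T]$ and lower-bound the weighted average
$$\mathcal{R}:=\frac{\int_{T^{\beta}}^{T}-\operatorname{Re}(\zeta^{\prime}/\zeta)(\sigma_{A}+it)\,|R(t)|^{2}\Phi(t)\,dt}{\int_{T^{\beta}}^{T}|R(t)|^{2}\Phi(t)\,dt},$$
since the maximum in the theorem dominates $\mathcal{R}$. I will take $\Phi$ to be a smooth nonnegative bump adapted to $[T^{\beta},T]$ whose Fourier transform decays rapidly away from zero, and set the resonator
$$R(t)=\sum_{n\in\mathcal{N}}n^{-it},$$
where $\mathcal{N}$ is the multiplicatively closed set of integers all of whose prime divisors lie in $\mathcal{P}:=\{p\leqslant X\}$ and satisfy $v_{p}(n)\leqslant K$ for every $p\in\mathcal{P}$. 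Taking $X=\log T/\log_{3}T$ and $K=\lfloor(\beta/2)\log_{3}T\rfloor$, the prime number theorem gives that the largest element of $\mathcal{N}$ is of size $\exp(K\sum_{p\leqslant X}\log p)=T^{\beta/2+o(1)}$, while $|\mathcal{N}|=(K+1)^{\pi(X)}=T^{o(1)}$.

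To evaluate $\mathcal{R}$, I will replace $-\zeta^{\prime}/\zeta(\sigma_{A}+it)$ by a truncated Dirichlet polynomial
$$\sum_{n\leqslant Y}\Lambda(n)/n^{\sigma_{A}+it},\qquad Y=T^{\eta},\ 0<\eta<(1-\beta)/4,$$
obtained via a Mellin--Barnes shift into the classical zero-free region of $\zeta$, where the analytic error is a negative power of $T$. Expanding $|R(t)|^{2}=\sum_{m,k\in\mathcal{N}}(m/k)^{-it}$ and integrating term by term, the diagonal condition $k=nm$ isolates the main contribution, and $\mathcal{R}$ reduces, up to acceptable error, to
$$\sum_{n\geqslant 1}\frac{\Lambda(n)}{n^{\sigma_{A}}}\cdot\frac{|\{m\in\mathcal{N}:nm\in\mathcal{N}\}|}{|\mathcal{N}|}.$$
The product structure of $\mathcal{N}$ makes this inner density easy to compute: it equals $(K+1-k)/(K+1)$ when $n=p^{k}$ with $p\in\mathcal{P}$ and $1\leqslant k\leqslant K$, and zero otherwise. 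The contributions from $k\geqslant 2$ are bounded by $\sum_{p}\log p/p^{2\sigma_{A}}=O(1)$, so the ratio simplifies to
$$\mathcal{R}=\frac{K}{K+1}\sum_{p\leqslant X}\frac{\log p}{p^{\sigma_{A}}}+O(1)+(\text{off-diagonal error}).$$

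For the final asymptotic, I will apply partial summation together with the prime number theorem, obtaining
$$\sum_{p\leqslant X}\frac{\log p}{p^{\sigma_{A}}}=\frac{X^{1-\sigma_{A}}-1}{1-\sigma_{A}}+O(\log_{4}T);$$
inserting $X=\log T/\log_{3}T$ and $1-\sigma_{A}=A/\log_{2}T$ yields the leading value $(e^{A}-1)\log_{2}T/A$, and the prefactor $K/(K+1)=1-O(1/\log_{3}T)$ contributes a further correction of size $\log_{2}T/\log_{3}T$, which dominates and produces the stated lower bound. The principal technical obstacle will be controlling the off-diagonal terms $k\neq nm$ together with the analytic error from the Dirichlet-series truncation: I expect to handle these via the rapid decay of $\hat{\Phi}$ at nonzero arguments combined with the bounds $|\mathcal{N}|=T^{o(1)}$ and $NY\leqslant T^{\beta/2+\eta+o(1)}\ll T$, making the off-diagonal terms $O(T^{-\delta})$ relative to the main term. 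The conclusion for $|\zeta^{\prime}/\zeta|$ is then immediate from $|\zeta^{\prime}/\zeta|\geqslant-\operatorname{Re}(\zeta^{\prime}/\zeta)$.
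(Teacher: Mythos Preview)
Your resonator construction differs from the paper's --- the paper uses a \emph{long} resonator $R(t)=\prod_{p\le X}(1-r(p)p^{-it})^{-1}$ with $r(p)=1-X^{\sigma_A-1}$ and $X=\kappa\log T\log_2 T$, together with a Gaussian $\Phi$ whose nonnegative Fourier transform lets one simply discard all off-diagonal terms rather than estimate them --- but your short-resonator alternative with $X=\log T/\log_3 T$ and bounded prime exponents is a legitimate variant, and the arithmetic computation you sketch does produce the same main term $(e^A-1)A^{-1}\log_2 T$ with error $O(\log_2 T/\log_3 T)$. The genuine problem lies in the analytic step.

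You propose to replace $-\zeta'/\zeta(\sigma_A+it)$ by $\sum_{n\le Y}\Lambda(n)n^{-\sigma_A-it}$ with error a negative power of $T$, via a contour shift ``into the classical zero-free region''. But $\sigma_A=1-A/\log_2 T$ lies well to the \emph{left} of the classical zero-free boundary $1-c/\log t$ (and even of the Vinogradov--Korobov boundary) for $t\in[T^\beta,T]$; there may well be zeros $\rho$ of $\zeta$ with $\operatorname{Re}\rho>\sigma_A$ and $\operatorname{Im}\rho$ near $t$. Any Perron or Mellin--Barnes shift from $\operatorname{Re}(s)>1$ down past $\sigma_A$ will pick up such zeros as residues, and their contribution is not a negative power of $T$; no uniform pointwise approximation of the kind you assert is available. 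The paper handles this by invoking Ingham's zero-density estimate $N(\sigma_0,T)\ll T^{3(1-\sigma_0)/(2-\sigma_0)}\log^5 T$: one fixes $\sigma_0=\sigma_A-\epsilon$, takes $Y=(\log T)^{20/\epsilon}$ (a power of $\log T$, not of $T$), and shows via Lemma~\ref{approximatezeta} that the approximation holds for all $t\in[T^\beta,T]$ outside a ``bad'' set $\mathfrak{B}(T)$ of measure $\ll T^{3\epsilon/(1+\epsilon)+o(1)}$. One then checks separately that the resonator-weighted integral of the Dirichlet polynomial over $\mathfrak{B}(T)$ is negligible compared with $M_1(R,T)\gg T^{1+o(1)}$, so that the large value of the Dirichlet polynomial is forced to occur at some $t\notin\mathfrak{B}(T)$, where it agrees with $-\operatorname{Re}\zeta'/\zeta(\sigma_A+it)$ up to $O((\log T)^{-18})$. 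Without this zero-density input your argument has a gap that the zero-free region alone cannot close.
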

\par
With the help of the above theorem and methods of \cite{soundararajan2008extreme, yang2023omega}, we obtain the next theorem, which gives a conditional lower bound for the 
measure of the set $\mathcal{M} \left(T,x\right).$ By observing this set, we can show the distribution of values of logarithmic derivatives of the Riemann zeta function.
\begin{theorem}
	\label{thm2}
	Assuming RH. Fix $\beta \in \left(0,1 \right).$ Let $x > 0 $ be given. Define the set $\mathcal{M} \left(T,x\right)$ as 
	$$\mathcal{M} \left(T,x\right) = \bigg\{t\in [T^{\beta}, T] \operatorname{:} - \operatorname{Re} \frac{\zeta^{\prime}}{\zeta}\left(\sigma_A + it \right) \geqslant \frac{\log_2 T}{\log_3 T}\bigg(\frac{e^A-1}{A}\log_3 T - x\bigg) \bigg\}.$$
	Then we have 
	$$\operatorname{meas}\left(\mathcal{M} \left(T,x\right)\right) \geqslant T^{1-\left(1-\beta\right)e^{-x} + o\left(1\right)},~ as ~~ T \to \infty.$$
\end{theorem}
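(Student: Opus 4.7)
The plan is to combine the resonance moment estimates behind Theorem \ref{thm1} with the Soundararajan-type level-set argument in the form used by Yang \cite{yang2023omega}. I would retain the resonator $R(t)=\sum_{m\in\mathcal M_R}r(m)m^{-it}$ constructed in the proof of Theorem \ref{thm1}, together with the moments
\[
M_1(R)=\int_{T^\beta}^T\Big(-\operatorname{Re}\frac{\zeta'}{\zeta}(\sigma_A+it)\Big)|R(t)|^2\,dt,\qquad M_2(R)=\int_{T^\beta}^T|R(t)|^2\,dt,
\]
so that the work already done for Theorem \ref{thm1} delivers $M_1(R)\ge V_0\,M_2(R)$ with $V_0=\tfrac{e^A-1}{A}\log_2T+O(\log_2T/\log_3T)$. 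Writing $V=\tfrac{\log_2T}{\log_3T}\bigl(\tfrac{e^A-1}{A}\log_3T-x\bigr)$ for the threshold defining $\mathcal M(T,x)$, one has $V_0-V=x\log_2T/\log_3T+O(\log_2T/\log_3T)$.

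The next step is the level-set decomposition. On $[T^\beta,T]\setminus\mathcal M(T,x)$ one has $-\operatorname{Re}(\zeta'/\zeta)<V$ by definition, so splitting the integrand of $M_1(R)$ across $\mathcal M(T,x)$ and its complement and then applying Cauchy-Schwarz on $\mathcal M(T,x)$ yields
\[
(V_0-V)M_2(R)\le M_1(R)-V M_2(R)\le\bigg(\int_{T^\beta}^T\Big|\frac{\zeta'}{\zeta}(\sigma_A+it)\Big|^2|R(t)|^2\,dt\bigg)^{\!1/2}\bigg(\int_{\mathcal M(T,x)}|R(t)|^2\,dt\bigg)^{\!1/2}.
\]
The trivial estimate $\int_{\mathcal M(T,x)}|R|^2\le\|R\|_\infty^2\operatorname{meas}(\mathcal M(T,x))\le\|r\|_1^2\operatorname{meas}(\mathcal M(T,x))$, with $\|r\|_1=\sum_m|r(m)|$, then rearranges to
\[
\operatorname{meas}(\mathcal M(T,x))\ge\frac{(V_0-V)^2\,M_2(R)^2}{\|r\|_1^2\,\displaystyle\int_{T^\beta}^T|\zeta'/\zeta(\sigma_A+it)|^2|R(t)|^2\,dt}.
\]

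Three quantitative inputs remain. The lower bound on $V_0-V$ is already in hand; the standard resonator analysis furnishes $M_2(R)\sim T\|r\|_2^2$ and controls $\|r\|_1^2/\|r\|_2^2$ through the choice of prime support. The main obstacle is the sharp RH-conditional mean-value bound
\[
\int_{T^\beta}^T\bigg|\frac{\zeta'}{\zeta}(\sigma_A+it)\bigg|^2|R(t)|^2\,dt\ll(\log_2T)^2\,T\,\|r\|_2^2,
\]
which I would prove by inserting Littlewood's approximate formula $\zeta'/\zeta(\sigma_A+it)=-\sum_{n\le(\log T)^2}\Lambda(n)n^{-\sigma_A-it}+O(e^{2A})$, valid under RH, squaring, multiplying by $|R|^2$, and invoking the mean-value theorem for Dirichlet polynomials so that the diagonal reproduces the square of Littlewood's pointwise bound $\asymp\bigl((e^{2A}-1)/A\bigr)\log_2T$.

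Assembling these inputs with enough precision to produce the exponent $1-(1-\beta)e^{-x}+o(1)$ is the most delicate step. Applying the inequality with the resonator of Theorem \ref{thm1}, whose prime support has length $T^{1-\beta-o(1)}$ and thus gives $\|r\|_1^2/\|r\|_2^2=T^{1-\beta+o(1)}$, already yields the $x=0$ regime $\operatorname{meas}(\mathcal M(T,0))\geq T^{\beta+o(1)}$. To recover the full $e^{-x}$-improvement I would use a family of resonators indexed by $x$, with prime support shortened to length $T^{(1-\beta)e^{-x}}$; this shortens $\|r\|_1^2/\|r\|_2^2$ to $T^{(1-\beta)e^{-x}+o(1)}$ while sacrificing only an $O(1)$ amount in the resonance estimate, which is absorbed into the asymptotically much larger deficiency $V_0-V\asymp x\log_2T/\log_3T$. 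This is analogous to the $x$-dependent resonator construction used in Yang \cite{yang2023omega}, and the technical bookkeeping of balancing support length against $V_0-V$ is where the bulk of the work will sit.
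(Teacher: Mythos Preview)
Your approach is workable but takes a more circuitous route than the paper, and one of your technical claims is both unnecessary and potentially false for this resonator.

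The paper avoids Cauchy--Schwarz entirely. After choosing the $x$-dependent parameter $\kappa=\frac{1-\beta}{2A}e^{-x+o(1)}$ (so $\|R\|_\infty^2\leqslant T^{2A\kappa+o(1)}=T^{(1-\beta)e^{-x}+o(1)}$), the level-set split gives
\[
\tfrac12 f(T)\tfrac{\log_2 T}{\log_3 T}\,M_1(R,T)\ \leqslant\ \int_{W_x}\operatorname{Re}\Big(\sum_{n\leqslant Y}\tfrac{\Lambda(n)}{n^{\sigma_A+it}}\Big)|R(t)|^2\Phi\,dt,
\]
and the right-hand side is bounded directly by the \emph{pointwise} estimates $|R(t)|^2\leqslant T^{2A\kappa+o(1)}$ and $|\sum_{n\leqslant Y}\Lambda(n)n^{-\sigma_A-it}|\leqslant\sum_{n\leqslant Y}\Lambda(n)n^{-\sigma_A}\ll\log_2 T$, yielding $\operatorname{meas}(W_x)\geqslant T^{1-2A\kappa+o(1)}$ immediately from the crude lower bound $M_1(R,T)\geqslant T^{1+o(1)}$. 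No second moment of $\zeta'/\zeta$ against $|R|^2$, no mean-value theorem for Dirichlet polynomials, and no appeal to $\|r\|_2$ is needed.

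Your Cauchy--Schwarz route would recover the same exponent once you replace your claimed bound $\int|\zeta'/\zeta|^2|R|^2\ll(\log_2 T)^2\,T\|r\|_2^2$ by the trivial consequence $\ll(\log_2 T)^2\,M_2(R)$ of Littlewood's pointwise bound. But the sharper form you propose via ``the mean-value theorem for Dirichlet polynomials'' is suspect here: the resonator in this paper is the \emph{long} Euler-product resonator supported on all $X$-smooth integers, so $M_2(R)\sim T\|r\|_2^2$ is not established (only $\geqslant$ is proved, via positivity of $\hat\Phi$), and the off-diagonal contribution need not be negligible. This is where your proposal has a genuine gap; fortunately it is a gap you created for yourself by introducing Cauchy--Schwarz, and it disappears once you follow the paper's simpler direct bound.
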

\par
Generalize the above results to Dirichlet $L$-functions, we obtain two similar theorems as follows.
\begin{theorem}
	\label{thm3}
	Let $A$ be any positive real number, $\sigma_{A} \operatorname{:} = 1-A/\log_2 q.$ Then for all sufficiently large prime $q$, we have 
	$$\max_{\substack{\chi \neq \chi_{0} \\ \chi \left(\operatorname{mod}~ q\right)}} - \operatorname{Re} \frac{L^{\prime}}{L}\left(\sigma_{A},\chi \right) \geqslant \frac{e^{A} -1}{A} \log_2 q + O\left(\frac{\log_2 {q}}{\log_3 {q}}\right),$$
    where $\chi_0$ is the principal character modulo $q$, and the above maximal is taken over all non-principal characters $\chi$ which modulo $q.$
	In particular, for all sufficiently large prime $q,$ we have
	$$\max_{\substack{\chi \neq \chi_{0} \\ \chi \left(\operatorname{mod}~ q\right)}} \left| \frac{L^{\prime}}{L}\left(\sigma_{A},\chi \right)\right| \geqslant \frac{e^{A} -1}{A} \log_2 q + O\left(\frac{\log_2 {q}}{\log_3 {q}}\right).$$
\end{theorem}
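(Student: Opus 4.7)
The plan is to adapt the resonance argument behind Theorem \ref{thm1}, replacing the integration over $t \in [T^{\beta}, T]$ by an averaging over the non-principal characters $\chi \pmod{q}$. Since $q$ is prime, every non-principal character is primitive and coprimality to the modulus is automatic for every integer $n < q$, which keeps the analytic setup clean.

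Mimicking Theorem \ref{thm1}, I would choose a multiplicative resonator $r$ supported on squarefree integers whose prime factors lie in a window $\mathcal{P}$ of primes around $\log_2 q$, with the weight $r(p)=\alpha(p)$ chosen to optimise the same ratio as in the Riemann case. Arrange the support set $\mathcal{N}$ so that $\mathcal{N} \subset [1, q)$; since $\mathcal{P}$ consists of primes much smaller than $q$, every $n \in \mathcal{N}$ is automatically coprime to $q$. Set
$$R(\chi) \operatorname{:}= \sum_{n \in \mathcal{N}} r(n) \chi(n),$$
and consider the two moments
\begin{align*}
M_1 &\operatorname{:}= \sum_{\chi \neq \chi_0} |R(\chi)|^2, \\
M_2 &\operatorname{:}= \sum_{\chi \neq \chi_0} |R(\chi)|^2 \left( -\operatorname{Re} \frac{L^{\prime}}{L}(\sigma_A, \chi) \right).
\end{align*}
The trivial bound $\max_{\chi \neq \chi_0} \bigl(-\operatorname{Re}(L^{\prime}/L)(\sigma_A, \chi)\bigr) \geqslant M_2/M_1$ then reduces the theorem to evaluating this ratio.

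For $M_1$, the orthogonality relation $\sum_{\chi \pmod q} \chi(a)\overline{\chi(b)} = \phi(q)$ if $a \equiv b \pmod q$ (and is $0$ otherwise), combined with $\mathcal{N} \subset [1, q)$, forces the diagonal $a = b$, giving $M_1 = \phi(q) \sum_{n} r(n)^2 - |R(\chi_0)|^2$, with the last subtraction negligible. For $M_2$, the key step is to approximate $-L^{\prime}/L(\sigma_A, \chi)$ by a short Dirichlet polynomial $\sum_{n \leqslant Y} \Lambda(n) \chi(n) n^{-\sigma_A}$, with $Y$ a small fixed power of $q$, and then apply orthogonality to the resulting triple sum. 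The diagonal condition $an \equiv b \pmod q$ with $an, b < q$ forces $an = b$, which produces the main term
$$\phi(q) \sum_{\substack{a \in \mathcal{N},\; an \in \mathcal{N} \\ n \leqslant Y}} r(a) r(an) \Lambda(n) n^{-\sigma_A}.$$
The resulting ratio $M_2/M_1$ is identical in form to the corresponding ratio appearing in the proof of Theorem \ref{thm1}, and its evaluation by the prime number theorem applied to the window $\mathcal{P}$ yields exactly $\frac{e^{A}-1}{A} \log_2 q + O(\log_2 q / \log_3 q)$.

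The main technical obstacle is the truncation of $-L^{\prime}/L(\sigma_A, \chi)$ by a Dirichlet polynomial: because $\sigma_A < 1$, shifting contours in Perron's formula picks up the zeros of $L(s,\chi)$ lying to the right of $\sigma_A$, and these contribute errors that the absolutely convergent series hides. Unconditionally, one averages this error over $\chi \pmod q$ and invokes a log-free zero-density estimate of large-sieve type (for instance in the form of Montgomery's or Jutila's bounds on the number of zeros of $L(s, \chi)$ in $\operatorname{Re}(s) \geqslant \sigma_A$), which keeps the total contribution of the zeros to $M_2$ much smaller than the main term. Once this approximation is secured, the remainder of the argument follows the $\zeta$-case of Theorem \ref{thm1} virtually line by line, and the ``in particular'' statement follows immediately from $|L^{\prime}/L| \geqslant -\operatorname{Re}(L^{\prime}/L)$.
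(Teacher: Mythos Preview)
Your overall strategy matches the paper: replace the $t$-average by an average over characters modulo~$q$, use orthogonality of characters in place of the Fourier transform of $\Phi$, approximate $-L'/L(\sigma_A,\chi)$ by a short Dirichlet polynomial via a Perron/contour-shift argument (this is exactly Lemma~\ref{approximateL}), and use Montgomery's zero-density estimate to show the approximation fails only on a small ``bad'' set $B_q$ of characters whose contribution to the weighted sum is negligible. So the skeleton is right, and the ``in particular'' clause follows as you say.

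Where you diverge is in the resonator, and here your description does not quite deliver what you claim. The paper does \emph{not} use a short resonator supported on $[1,q)$: it reuses verbatim the long, completely multiplicative resonator $R(\chi)=\prod_{p\le X}(1-r(p)\chi(p))^{-1}$ from Theorem~\ref{thm1}, with $r(p)=1-X^{\sigma_A-1}$ and $X=\kappa\log q\,\log_2 q$ (so the relevant primes go up to about $\log q\,\log_2 q$, not ``around $\log_2 q$''). Since every $r(n)\ge 0$, one bounds $S_1$ below by positivity, and in the $S_2$ computation the inner character sum after extracting a single prime $p$ collapses, because $q$ is prime, to \emph{exactly} $S_1$ again (the condition $pn\equiv pk\pmod q$ is equivalent to $n\equiv k\pmod q$). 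This gives $S_2/S_1\ge\sum_{p\le X}r(p)\log p\,/p^{\sigma_A}$ with no truncation loss, and that sum is then evaluated just as in Section~\ref{proofthm1}. By contrast, a squarefree resonator forces the side condition $p\nmid a$ in the diagonal, which inserts a factor $1/(1+r(p)^2)$ and makes the ratio \emph{not} literally ``identical in form'' to the one in Theorem~\ref{thm1}; getting back the constant $(e^A-1)/A$ then needs a separate optimisation you have not indicated. A smaller point: the paper takes $Y=(\log q)^{20/\epsilon}$, a power of $\log q$ rather than a fixed power of $q$, which is what makes the error term in Lemma~\ref{approximateL} of size $(\log q)^{-18}$ while keeping the zero-density input mild.
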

\begin{theorem}
	\label{thm4}
	Assuming GRH. Let $x > 0 $ be given. Define the set $\mathcal{N} \left(q,x\right)$ as 
	$$\mathcal{N} \left(q,x\right) = \bigg\{\chi \left(\operatorname{mod}~q\right) \operatorname{:} - \operatorname{Re} \frac{L^{\prime}}{L}\left(\sigma_A , \chi\right) \geqslant \frac{\log_2 {q}}{\log_3 {q}}\left(\frac{e^{A} -1}{A} \log_3 q - x\right)\bigg\}.$$
	Then for prime $q \to \infty,$ we have
	$$\#\mathcal{N} \left(q,x\right) \geqslant q^{1-e^{-x}+ o \left(1\right)}.$$
\end{theorem}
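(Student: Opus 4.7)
The plan is to mirror the proof of Theorem \ref{thm2} in the character-sum setting, replacing integration in $t\in[T^\beta,T]$ by summation over non-principal Dirichlet characters modulo the prime $q$, and invoking GRH in place of RH. Write $\Phi(\chi):=-\operatorname{Re}(L'/L)(\sigma_A,\chi)$ and denote by $V_q$ the threshold in the definition of $\mathcal{N}(q,x)$, so that the main term $M_q:=\frac{e^A-1}{A}\log_2 q+O(\log_2 q/\log_3 q)$ furnished by Theorem \ref{thm3} exceeds $V_q$ by $(x+o(1))\log_2 q/\log_3 q$.

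First I would reuse the resonator $R(\chi)=\sum_{n\leqslant N}r(n)\chi(n)$ constructed in the proof of Theorem \ref{thm3}, whose coefficients $r(n)$ are multiplicative, non-negative and supported on products of primes in a tuned range, with $N$ taken as a small power of $q$ depending on $x$. The resonance identity underlying Theorem \ref{thm3} reads
\[
\sum_{\chi\neq\chi_0}|R(\chi)|^2\Phi(\chi)\;\geqslant\; M_q\sum_{\chi\neq\chi_0}|R(\chi)|^2.
\]
Splitting the left side according to whether $\chi\in\mathcal{N}(q,x)$ and using $\Phi(\chi)<V_q$ on the complement, the off-set piece is at most $V_q\sum_{\chi\neq\chi_0}|R(\chi)|^2$, whence
\[
\sum_{\chi\in\mathcal{N}(q,x)}|R(\chi)|^2\Phi(\chi)\;\geqslant\;(x+o(1))\tfrac{\log_2 q}{\log_3 q}\sum_{\chi\neq\chi_0}|R(\chi)|^2.
\]

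Next, assuming GRH, a Littlewood-type truncation of the explicit formula at height $(\log q)^2$ gives $|\Phi(\chi)|\ll\log_2 q$ uniformly in $\chi$, in direct analogy with the bound for $\zeta'/\zeta$ under RH used in Theorem \ref{thm2}. Combining this with two applications of Cauchy--Schwarz yields
\[
\#\mathcal{N}(q,x)\;\geqslant\;\Big(\tfrac{x+o(1)}{\log_3 q}\Big)^{\!2}\frac{\big(\sum_{\chi\neq\chi_0}|R(\chi)|^2\big)^2}{\sum_{\chi\neq\chi_0}|R(\chi)|^4}.
\]
Character orthogonality modulo the prime $q$ evaluates the moments: for $N^2<q$ the congruence $n_1 n_2\equiv m_1 m_2\pmod q$ collapses to equality on the support of $r$, and one obtains $\sum_{\chi\neq\chi_0}|R(\chi)|^2=(q-1)\sum_{(n,q)=1}r(n)^2+O(1)$ and $\sum_{\chi\neq\chi_0}|R(\chi)|^4=(q-1)\sum_k(r\ast r)(k)^2+O(1)$, where $\ast$ is multiplicative convolution. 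With the resonator support and weights calibrated to the parameter $x$ exactly as in Yang's proof of Theorem \ref{thm2}, the moment ratio $(\sum r^2)^2/\sum(r\ast r)^2$ is driven to $q^{-e^{-x}+o(1)}$, which after multiplication by $q-1$ delivers the claimed $\#\mathcal{N}(q,x)\geqslant q^{1-e^{-x}+o(1)}$.

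I expect the main obstacle to be the last calibration step: transferring the resonator construction of Soundararajan--Yang from the continuous $t$-integration setting to discrete character orthogonality. The $L^4$ moment of $R$ modulo $q$ is governed by the multiplicative congruence $n_1 n_2\equiv m_1 m_2\pmod q$, which is more rigid than the continuous near-equality arising in $\int|R(t)|^4\,dt$; verifying that the same prime-ranges and weight profile still produce the exponent $e^{-x}$, together with the side-issues of requiring $(n,q)=1$ on the support of $r$ and removing the principal character $\chi_0$, is the delicate part.
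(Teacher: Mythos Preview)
Your route diverges from the paper's in an essential way, and the divergence creates the very obstacle you flag at the end. The paper never touches fourth moments or Cauchy--Schwarz. Instead it exploits that the long resonator $R(\chi)=\prod_{p\le X}(1-r(p)\chi(p))^{-1}$ satisfies the \emph{pointwise} bound $|R(\chi)|^2\le q^{2A\kappa+o(1)}$ (this is \eqref{Rchi}). After splitting $G_q^\ast=V_x\cup W_x$ and using the GRH bound $\operatorname{Re}\sum_{n\le Y}\Lambda(n)\chi(n)n^{-\sigma_A}\ll\log_2 q$, one gets directly
\[
\sum_{\chi\in W_x}\Big|\operatorname{Re}\sum_{n\le Y}\frac{\Lambda(n)\chi(n)}{n^{\sigma_A}}\Big|\,|R(\chi)|^2
\;\le\;(\log_2 q)\,q^{2A\kappa+o(1)}\,\#W_x,
\]
so $\#W_x\ge S_1^\ast(R,q)\,q^{-2A\kappa+o(1)}\ge q^{1-2A\kappa+o(1)}$. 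The dependence on $x$ enters \emph{only} through the choice $\kappa=\frac{1}{2A}\exp\bigl(-x+(\log q)^{-E}+f(q)\bigr)$, which makes $2A\kappa=e^{-x}+o(1)$. No moment ratio, no convolution $r\!*\!r$, no congruence analysis is needed.

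Your proposal instead assumes a truncated resonator supported on $n\le N$ with $N^2<q$, so that $n_1n_2\equiv m_1m_2\pmod q$ collapses to equality. But the resonator actually constructed in Theorem~\ref{thm3} is the full Euler product over $p\le X$ and has \emph{infinite} support on $X$-smooth numbers; for that object the congruence does not collapse and your $L^4$ evaluation is not available. Conversely, if you switch to a genuinely short resonator with $N^2<q$, the clean factorisation in \eqref{finalq5} (writing $n=pk$ with $k$ ranging over the \emph{full} support) breaks down, and the resonance inequality $S_2/S_1\ge M_q$ you take as input would itself require a new proof. Either way the $L^4$ step is both harder than necessary and incompatible with the resonator you claim to reuse. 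If you simply bound $\sum_\chi|R(\chi)|^4\le(\max_\chi|R(\chi)|^2)\sum_\chi|R(\chi)|^2$ your argument collapses exactly to the paper's; the refinement you propose buys nothing here.
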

\par
We will make use of resonance methods to obtain our theorems in this paper. 
In 1988, the resonance method was first invented by Voronin \cite{voronin1988lower}. 
This method is an effective and ingenious way to get the lower bounds of the complex functions by taking the help of some ``resonators''. 
But unfortunately, his work was not taken seriously at that time.
About twenty years later, Soundararajan \cite{soundararajan2008extreme} introduced a broader and more practical method based on Voronin's work, which called short resonator methods. 
Later, Aistleitner \cite{aistleitner2016lower} gave another long resonator methods, developing the resonance methods. 
We recommend readers to see \cite{Aistleitner2019IMRN, Aistleitner2019QJMath, Bondarenko2018MathAnn, soundararajan2008extreme, voronin1988lower} to get more information on resonance methods.
\par 
In this paper, we will employ long resonator methods. The main idea is to find a Dirichlet series $R\left(t\right) = \sum_{n \in \mathbb{N}} r\left(n\right) n ^{- i t},$ named resonator, 
to resonate with logarithmic derivatives of the Riemann zeta function and pick out its large values.
\par
The outline of this paper is as follows. In Section \ref{preliminarylemmas} we present some preliminary lemmas. In Section \ref{proofthm1} we prove Theorem \ref{thm1} by the resonance method. In Section \ref{proofthm2} we prove Theorem \ref{thm2}. In fact, the proof is a slightly modification of the proof of Theorem \ref{thm1}. The proof of Theorems \ref{thm3} and \ref{thm4} is provided in Sections \ref{proofthm3} and \ref{proofthm4}, respectively. 
\par
~\\
\indent \textbf{Notation.}~In this paper, we denote $p$ and $q$ as prime numbers. We denote $\chi$ as Dirichlet characters and $\chi_0$ as the principal character. 
We denote $\Lambda \left(n\right)$ as the Mangoldt function, and $\vartheta \left(x\right) = \sum_{p \leqslant x}\log{p}$ as the Chebyshev's function.
In addition, we use the short-hand notations, $\log_2 T \operatorname{:} = \log{\log{T}},~ \log_3 T \operatorname{:} = \log{\log{\log T}}$ and so on. 

\section{Preliminary Lemmas}
\label{preliminarylemmas}
First, we need an approximate formula for $\zeta^{\prime}\left( \sigma_{A} + it\right)/ \zeta \left(\sigma_{A} + it\right)$. We establish the following lemma, which is useful when combining with zero-density result of $\zeta \left(s\right).$ 
This lemma is similar to \cite[Lemma 1]{yang2023omega}.
 \begin{lemma}
	\label{approximatezeta}
    Let $A$ be any positive real number, $T$ be sufficiently large real number. Set
    $\sigma_{A} \operatorname{:} = 1-A/\log_2 T.$
    Let $Y \geqslant 3, t \geqslant Y+3$ and $\frac{1}{2} \leqslant \sigma_0 \leqslant 1 .$ Suppose that the rectangle $\{s:\sigma_0 <\operatorname{Re}\left(s\right) \leqslant 1,\lvert \operatorname{Im}\left(s\right) - t \rvert \leqslant Y+2 \}$ 
	is free of zeros of $\zeta \left(s\right).$ Then for $\sigma_{A} \left(\leqslant 3\right)$ and any $\xi \in \left[t-Y , t+Y\right],$ we have
	$$\left|\frac{\zeta^{\prime}}{\zeta}\left(\sigma_{A} + i\xi \right) \right| \ll \frac{\log t}{\sigma_{A}-\sigma_{0}}.$$
	\par
	Further, for $\sigma_{1} \in \left(\sigma_{0},\sigma_{A}\right),$ we have 
	$$-\frac{\zeta ^{\prime}}{\zeta}\left(\sigma_{A} + it \right)= \sum_{n \leqslant Y}\frac{\Lambda \left(n\right)}{n^{\sigma_{A} + it}} + O\left(\frac{\log{t}}{\sigma_{1}-\sigma_{0}}Y^{\sigma_{1}-\sigma_{A}} \log {\frac{Y}{\sigma_{A}-\sigma_{1}}}\right).$$
\end{lemma}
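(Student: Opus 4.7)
The plan is to prove the two assertions in sequence, first establishing the pointwise bound on the logarithmic derivative via a Borel–Carathéodory style argument in the zero-free rectangle, and then deriving the truncated Dirichlet series approximation by a Perron/Mellin contour shift that uses the first bound to control the tail.

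For the first assertion I would work on a disk centered at $s_0 = \sigma_A + i\xi$ (or a slightly shifted point with real part close to $1$). Since $\zeta(s)$ is non-vanishing on the rectangle $\{\sigma_0 < \operatorname{Re}(s)\leq 1,\ |\operatorname{Im}(s)-t|\leq Y+2\}$, a branch of $\log\zeta(s)$ is analytic there, and its real part $\log|\zeta(s)|$ is uniformly bounded above by $O(\log t)$ in any region of bounded width for $\operatorname{Re}(s)>\tfrac12$ (this is the classical convexity/Phragmén–Lindelöf bound, together with $|\zeta(s)| = O(1)$ on $\operatorname{Re}(s)=3$). Choose concentric disks of radii $r<R$ with $R = \sigma_A-\sigma_0$ (or a fixed fraction thereof) and apply Borel–Carathéodory to pass from an upper bound on $\operatorname{Re}\log\zeta$ to an upper bound on $|\log\zeta|$; then Cauchy's integral formula for $(\log\zeta)' = \zeta'/\zeta$ on the smaller disk gives
\[
\left|\tfrac{\zeta'}{\zeta}(\sigma_A+i\xi)\right| \ll \frac{\log t}{R-r} \ll \frac{\log t}{\sigma_A-\sigma_0},
\]
provided one chooses $r$ to be a suitable fraction of $R$ so that the disk still contains a neighborhood of $\sigma_A+i\xi$ within the zero-free box (the condition $|\xi-t|\leq Y$ and the vertical thickness $Y+2$ of the rectangle ensures this).

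For the second assertion I would start from the Mellin–Perron identity
\[
\sum_{n\leq Y}\frac{\Lambda(n)}{n^{\sigma_A+it}} = -\frac{1}{2\pi i}\int_{(c)}\frac{\zeta'}{\zeta}(\sigma_A+it+w)\frac{Y^w}{w}\,dw
\]
for suitable $c>0$, truncated at height $\pm Y$ via the usual Perron error (this is where the $\log(Y/(\sigma_A-\sigma_1))$ factor will arise). I would then shift the contour leftwards to the vertical line $\operatorname{Re}(w)=\sigma_1-\sigma_A<0$, picking up the residue $-\zeta'/\zeta(\sigma_A+it)$ at $w=0$. The vertical integral on the new line is controlled by the bound from the first part of the lemma, applied pointwise along $\operatorname{Re}(s)=\sigma_1$: its modulus is $\ll \frac{\log t}{\sigma_1-\sigma_0}$, and multiplying by $Y^{\sigma_1-\sigma_A}|dw/w|$ and integrating against $dw/w$ over a segment of length $O(Y)$ yields the claimed error term $O\bigl(\frac{\log t}{\sigma_1-\sigma_0}Y^{\sigma_1-\sigma_A}\log\frac{Y}{\sigma_A-\sigma_1}\bigr)$. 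The two horizontal connecting segments contribute smaller terms absorbed into this bound, again using the first-part estimate.

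The main obstacle is the careful bookkeeping in the contour shift: one must verify that the entire rectangle traversed by the shifted contour lies inside the zero-free region where the first part is applicable (forcing constraints tying the truncation height of Perron's formula to the hypothesis $|\operatorname{Im}(s)-t|\leq Y+2$), and one must track how the factor $1/w$ near $w=0$ combines with $Y^w$ to produce precisely the logarithmic factor $\log(Y/(\sigma_A-\sigma_1))$ rather than a larger loss. Once these choices are made consistently, the result follows by combining the residue calculation with the uniform bound from the first part.
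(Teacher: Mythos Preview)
Your contour-shift argument for the second assertion matches the paper's proof essentially line for line: Perron's formula with $c=1-\sigma_A+1/\log Y$, shift to $\operatorname{Re}(w)=\sigma_1-\sigma_A$, collect the residue $-\zeta'/\zeta(\sigma_A+it)$ at $w=0$, and bound the horizontal and new vertical segments using the first assertion. Your remark that the factor $\log(Y/(\sigma_A-\sigma_1))$ comes from integrating $|w|^{-1}$ over $[-Y,Y]$ split at $|\operatorname{Im} w|\lessgtr 1$ is exactly what the paper does.

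For the first assertion, however, the paper does \emph{not} go through Borel--Carath\'eodory. It simply invokes the local partial-fraction formula
\[
\frac{\zeta'}{\zeta}(s)=-\frac{1}{s-1}+\sum_{|\gamma-\xi|\leq 1}\frac{1}{s-\rho}+O(\log t),
\]
notes that by the zero-free hypothesis every such $\rho$ has $\operatorname{Re}\rho\leq\sigma_0$, and uses that there are $O(\log t)$ zeros with $|\gamma-\xi|\leq 1$ to get $|\zeta'/\zeta(\sigma_A+i\xi)|\ll(\log t)/(\sigma_A-\sigma_0)$ in one line. Your route is more roundabout and, as written, does not quite deliver this bound: combining Borel--Carath\'eodory (which controls $|\log\zeta|$ on a disk of radius $r$ by $\frac{r}{R-r}\log t$) with Cauchy's estimate for the derivative on a further shrunken disk costs an extra factor of $1/(R-r)$, so you end up with $\log t/(\sigma_A-\sigma_0)^2$ rather than $\log t/(\sigma_A-\sigma_0)$. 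To recover the stated exponent you would need Landau's lemma on $f'/f$ (which builds in Jensen's formula and is essentially equivalent to the partial-fraction approach), not plain Borel--Carath\'eodory plus Cauchy. The loss is harmless for the later applications in the paper, but it does mean your sketch for the first claim, taken literally, proves a weaker inequality than the one asserted.
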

\begin{proof}
	We write $\rho = \alpha +i \gamma, 0 < \alpha <1 $ as the non-trivial zeros of $\zeta(s).$ By  \cite[Lemma 8.2]{Koukoulopoulos2019}, we have 
$$\frac{\zeta^{\prime}}{\zeta}\left(s \right) = - \frac{1}{s-1}+\sum_{\left| \gamma -t \right| \leqslant 1} \frac{1}{s- \rho } + O\left(\log{\left|s\right|}+2\right).$$
Then we have 
$$\left|\frac{\zeta^{\prime}}{\zeta}\left(\sigma_{A} + i\xi \right) \right| \leqslant \sum_{\left|\gamma -\xi \right| \leqslant 1} \frac{1}{\sigma_{A}-\alpha } + O\left(\log{\xi }\right) \ll \frac{\log{t}}{\sigma_{A}-\sigma_{0}}$$
because of our assumption on the rectangle. Now we obtain the first claim.
\par
Taking $c=-\sigma_{A}+1+1/\log{Y}$ in Perron's formula of \cite{Koukoulopoulos2019}, we find the following asymptotic formula
\begin{equation}
	\label{Perron}
	\sum_{n \leqslant Y}\frac{\Lambda \left(n\right)}{n^{\sigma_{A} + it}}=\frac{1}{2\pi i}\int_{c-iY}^{c+iY} - \frac{\zeta^{\prime}\left(\sigma_{A}+it+s\right)}{\zeta \left(\sigma_{A}+it+s\right)} \frac{Y^{s}}{s} \mathrm{d}s + O\left(Y^{-\sigma_{A}}\left(\log{Y}\right)^2\right).
\end{equation}
\par 
Note that if $\sigma_1 - \sigma_{A} \leqslant \operatorname{Re}\left(s\right) \leqslant c$ and $\left| \operatorname{Im}\left(s\right) \right| \leqslant Y, $ then 
$\sigma_1 \leqslant \operatorname{Re}\left(\sigma_A +it +s\right) \leqslant 1+1/\log{Y}$ and $\left| \operatorname{Im}\left(\sigma_A +it +s\right) - t\right| \leqslant Y.$
As the rectangle is a zero-free region of $\zeta \left(s\right),$ by the residue theorem, we can move the line of integration from the line $\operatorname{Re}\left(s\right) = c$ to the line $\operatorname{Re}\left(s\right) = \sigma_1 - \sigma_A$ to obtain
	\begin{align}
		\frac{1}{2\pi i}\int_{c-iY}^{c+iY} - \frac{\zeta^{\prime}\left(\sigma_{A}+it+s\right)}{\zeta \left(\sigma_{A}+it+s\right)} &\frac{Y^{s}}{s} \mathrm{d}s  = - \frac{\zeta^{\prime}\left(\sigma_A +it\right)}{\zeta\left(\sigma_A +it\right)} \nonumber \\
	 &+ \frac{1}{2\pi i}\left(\int_{c-iY}^{\sigma_1-\sigma_A-iY} +\int_{\sigma_1-\sigma_A-iY}^{\sigma_1-\sigma_A+iY} + \int_{\sigma_1-\sigma_A+iY}^{c+iY}\right). \label{residue}
	\end{align}
For simplicity, we omitted integrands on the right-hand side of \eqref{residue} without causing confusion.
\par
According to the first claim of this lemma, the first and third integrals on the right-hand side of \eqref{residue} are bounded by
\begin{equation}
	\label{tom}
	\ll \int_{\sigma_1-\sigma_A}^{1-\sigma_{A} + \frac{1}{\log{Y}}}\frac{\log{t}}{\sigma_A + x - \sigma_0} \frac{Y^x}{Y} \mathrm{d}x \ll \frac{\log{t}}{\left(\sigma_1- \sigma_0\right)\log{Y}}Y^{-\sigma_A}, 
\end{equation}meanwhile, the second integral on the right-hand of \eqref{residue} is bounded by 
\begin{equation}
	\label{jerry}
	\ll \int_{-Y}^{Y} \frac{\log{t}}{\sigma_1 - \sigma_0} \frac{Y^{\sigma_1 - \sigma_A }}{\sqrt{\left(\sigma_1 - \sigma_A \right)^2 + \xi ^2}} \mathrm{d}\xi 
	\ll \frac{\log{t}}{\sigma_1 - \sigma_0} Y^{\sigma_1 - \sigma_A } \log{\frac{Y}{\sigma_A - \sigma_1}}.
\end{equation}
To compute \eqref{jerry}, we have divided the integral into two integrals , which are $\left|\xi \right| \leqslant 1 $ and $1 < \left| \xi  \right| \leqslant Y.$ 
Combining \eqref{Perron}, \eqref{residue}, \eqref{tom} and \eqref{jerry}, we obtain the second claim as well. 
\end{proof}
\par
Similar to Lemma \ref{approximatezeta}, we have the following lemma on Dirichlet $L$-functions. 
In fact, this lemma is similar to \cite[Lemma 2]{yang2023omega}.
\begin{lemma}
	\label{approximateL}
       Let $A$ be any positive real number, $q \geqslant 3$ be a prime number, $\sigma_{A} \operatorname{:} = 1-A/\log_2 q.$
	Let $Y \geqslant 3, -3q \leqslant t \leqslant 3q $ and $\frac{1}{2} \leqslant \sigma_0 < 1.$ Suppose that the rectangle $\{s:\sigma_0 <\operatorname{Re}\left(s\right) \leqslant 1,\lvert \operatorname{Im}\left(s\right) - t \rvert \leqslant Y+2 \}$ is
	free of zeros of $L \left(s, \chi \right),$ where $\chi$ is a non-principal character modulo $q$. Then for $\sigma_A \left(\leqslant 3\right)$ and any
	$\xi \in \left[t-Y, t+Y\right],$ we have 
	$$\left|\frac{L^{\prime}}{L} \left(\sigma_A + i \xi , \chi \right)\right| \ll \frac{\log{q}}{\sigma_A - \sigma_0}.$$
	\par 
	Further, for $\sigma_1 \in \left(\sigma_0, \sigma_A\right),$ we have
	$$-\frac{L ^{\prime}}{L}\left(\sigma_{A} + it , \chi \right)= \sum_{n \leqslant Y}\frac{\Lambda \left(n\right)\chi \left(n\right)}{n^{\sigma_{A} + it}}  + O\left(\frac{\log{q}}{\sigma_{1}-\sigma_{0}}Y^{\sigma_{1}-\sigma_{A}} \log {\frac{Y}{\sigma_{A}-\sigma_{1}}}\right).$$
\end{lemma}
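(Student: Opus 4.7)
The plan is to mirror the proof of Lemma \ref{approximatezeta} almost verbatim, adapting the partial fraction decomposition and Perron's formula to the $L$-function setting. The only real changes are the absence of the pole at $s=1$ (since $\chi$ is non-principal) and the replacement of $\log t$ by $\log q$, which is legitimate because $|t|\leqslant 3q$ forces $\log(|t|+2)\ll \log q$.

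First I would invoke the standard partial fraction expansion for $L'/L(s,\chi)$, as given for instance in Koukoulopoulos's book: for $\chi$ non-principal modulo $q$,
$$\frac{L^{\prime}}{L}(s,\chi) = \sum_{|\gamma-t|\leqslant 1}\frac{1}{s-\rho}+O(\log(q(|t|+2))),$$
where $\rho=\alpha+i\gamma$ runs over the non-trivial zeros of $L(s,\chi)$. Under the hypothesis that the rectangle $\{\sigma_0<\operatorname{Re}(s)\leqslant 1,\ |\operatorname{Im}(s)-t|\leqslant Y+2\}$ is zero-free, the classical Riemann–von Mangoldt estimate gives $\ll \log q$ zeros in the relevant strip, and each contributes at most $(\sigma_A-\sigma_0)^{-1}$ to the sum at $s=\sigma_A+i\xi$ for $\xi\in[t-Y,t+Y]$. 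This yields the first claim
$$\left|\frac{L^{\prime}}{L}(\sigma_A+i\xi,\chi)\right|\ll\frac{\log q}{\sigma_A-\sigma_0}.$$

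For the second claim I would apply Perron's formula with $c=-\sigma_A+1+1/\log Y$ to obtain
$$\sum_{n\leqslant Y}\frac{\Lambda(n)\chi(n)}{n^{\sigma_A+it}}=\frac{1}{2\pi i}\int_{c-iY}^{c+iY}-\frac{L^{\prime}}{L}(\sigma_A+it+s,\chi)\frac{Y^s}{s}\,\mathrm{d}s+O(Y^{-\sigma_A}(\log Y)^2),$$
exactly as in \eqref{Perron}. Then I would shift the contour leftward to the vertical line $\operatorname{Re}(s)=\sigma_1-\sigma_A$. Since $L(s,\chi)$ has no pole inside the rectangle (the key simplification relative to the $\zeta$-case, where the $-1/(s-1)$ term appeared), the only residue encountered is at $s=0$, contributing exactly $-L^{\prime}/L(\sigma_A+it,\chi)$.

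The three remaining integrals — the two horizontal segments at height $\pm Y$ and the shifted vertical segment — are bounded by applying the first claim of the lemma to the integrand. The horizontal pieces give $\ll \frac{\log q}{(\sigma_1-\sigma_0)\log Y}Y^{-\sigma_A}$ as in \eqref{tom}, while the vertical piece, split into the ranges $|\xi|\leqslant 1$ and $1<|\xi|\leqslant Y$ to handle the singularity of $1/s$, gives $\ll\frac{\log q}{\sigma_1-\sigma_0}Y^{\sigma_1-\sigma_A}\log\frac{Y}{\sigma_A-\sigma_1}$ as in \eqref{jerry}. Collecting these error terms produces the claimed asymptotic. The only step requiring any care is verifying the partial fraction expansion with the $\log q$ bound in place of $\log|s|$; this is standard but should be cited explicitly to avoid confusion between the $\zeta$ and $L$ cases.
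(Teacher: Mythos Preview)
Your proposal is correct and follows essentially the same approach as the paper: the paper invokes \cite[Lemma 11.4]{Koukoulopoulos2019} for the partial fraction expansion to get the first claim, then states that ``the following steps are the same as in the proof of Lemma~\ref{approximatezeta}'' and omits them. You have simply spelled out those omitted steps (Perron's formula, contour shift, and the bounds \eqref{tom}--\eqref{jerry}) and correctly identified the two adaptations needed---replacing $\log t$ by $\log q$ via the constraint $|t|\leqslant 3q$, and noting that no pole of $L'/L$ is encountered in the contour shift since $\chi$ is non-principal.
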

\begin{proof}
	We write $\rho^{\prime} = \alpha^{\prime} +i \gamma^{\prime}, 0 < \alpha^{\prime} <1  $ as the non-trivial zeros of $L\left(s,\chi \right).$ By \cite[Lemma 11.4]{Koukoulopoulos2019}, we have 
	$$\left|\frac{L^{\prime}}{L}\left(\sigma_{A} + i\xi ,\chi  \right) \right| \leqslant \sum_{\left|\gamma^{\prime} -\xi \right| \leqslant 1} \frac{1}{\sigma_{A}-\alpha^{\prime} } + O\left(\log{q }\right) \ll \frac{\log{q}}{\sigma_{A}-\sigma_{0}}.$$
	The following steps are the same as in the proof of Lemma \ref{approximatezeta}, so we omitted them.
\end{proof}

\section{Proof of Theorem \ref{thm1}}
\label{proofthm1}
We still need the zero-density result of $\zeta\left(s\right).$ Let $N\left(\sigma, T\right)$ denote the number of zeros of $\zeta \left(s\right)$
inside the rectangle $\{s: \operatorname{Re}\left(s\right) \geqslant \sigma, 0 < \operatorname{Im}\left(s\right) \leqslant T \}.$
For $\frac{1}{2} \leqslant \sigma_0 \leqslant 1$ and $T \geqslant 2,$ Ingham \cite{Ingham1940} obtained a classical result 
$$N\left(\sigma_0, T\right) \ll T^{\frac{3\left(1-\sigma_0\right)}{2-\sigma_0}}\log ^ 5 T.$$
\par
Fix $\epsilon \in \left(0,\sigma_A - \frac{1}{2}\right).$ We set $\sigma_0 = \sigma_A - \epsilon, $ $\sigma_1 = \sigma_0 + 1/\log{Y},$ 
and $Y=\left(\log {T}\right)^{20/ \epsilon}.$ Combining with Ingham's zero-density result of $\zeta \left(s\right)$ we establish the following approximate formula
\begin{equation}
	\label{final1}
	- \frac{\zeta ^{\prime}}{\zeta} \left(\sigma_A + i t\right)= \sum_{n \leqslant Y} \frac{\Lambda \left(n\right)}{n^{\sigma_A + i t}} +O\left(\left(\log{T}\right)^{-18}\right),
	~ \forall t \in [T^{\beta}, T] \setminus \mathfrak{B}  \left(T\right),
\end{equation}where $\mathfrak{B} \left(T\right) $ is a ``bad'' set satisfying
\begin{equation}
	\label{meas bad set}
	\operatorname{meas} \left(\mathfrak{B} \left(T\right)\right) \ll T^{\frac{3\left(1-\sigma_A + \epsilon\right)}{2-\sigma_A + \epsilon}}\left(\log{T}\right)^{5+ \frac{20}{\epsilon}}.
\end{equation}
\par
Next, we take $X = \kappa \log{T} \log_2 {T}.$ Define $\Phi \left(t\right)= e^{- t^2/2}$ as in \cite{Bondarenko2017}, we note that for any $ \xi \in \mathbb{R},$ the Fourier transform of $\Phi $ is
$$\hat{\Phi }\left(\xi \right) \operatorname{:} = \int_{- \infty}^{\infty}\Phi \left(x\right) e^{-ix\xi } \mathrm{d}x = \sqrt{2 \pi }\Phi \left(\xi \right),$$
which is always positive. 
Let $r \left(n\right)$ be a completely multiplicative function with values at primes as 
\begin{equation}
	\label{rp}
	r\left(p\right) = 
	\begin{cases}
		1-X^{\sigma_A - 1}, ~ &\operatorname{if} p \leqslant X, \\
		0, ~ &\operatorname{if} p > X.
	\end{cases}
\end{equation}
\par
Define
$$R\left(t\right)  = \prod_{p} \frac{1}{1- r\left(p\right) p^{-it}} = \sum_{n \in \mathbb{N}} r \left(n\right) n ^{- i t} .$$
Notice that 
$$\log{\left| R \left(t\right) \right|} \leqslant \sum_{p \leqslant X} \log{| \frac{1}{1 - r\left(p\right)} |} = \sum_{p \leqslant X} \left(1 - \sigma_A\right) \log{X},$$
where the first inequality holds by the fact that $\left| p^{it} - r\left(p\right)\right| \geqslant \left| \left| p^{it} \right| - \left| r\left(p\right) \right|\right| = \left|1 - r\left(p\right) \right|.$
By the prime number theorem, we obtain that 
\begin{equation}
	\label{Rt}
	\left| R \left(t\right)\right| ^ 2 \leqslant T^{2A \kappa + o\left(1\right)}.
\end{equation}
\par
Define $M_2 \left(R,T\right)$, $M_1 \left(R,T\right)$, $I_2 \left(R,T\right)$ and $I_1 \left(R,T\right)$ as follows.
\begin{align*}
	M_2 \left(R,T\right)=&\int_{T^{\beta}}^{T} \operatorname{Re} \bigg(\sum_{n \leqslant Y} \frac{\Lambda \left(n\right)}{n^{\sigma_A + i t}}\bigg)\left|R\left(t\right) \right|^2 \Phi \left(\frac{t \log{T}}{T}\right) \mathrm{d}t,\\
	M_1 \left(R,T\right)=&\int_{T^{\beta}}^{T} \left|R\left(t\right) \right|^2 \Phi \left(\frac{t \log{T}}{T}\right) \mathrm{d}t, \\
	I_2 \left(R,T\right)=&\int_{-\infty}^{+\infty} \operatorname{Re} \bigg(\sum_{n \leqslant Y} \frac{\Lambda \left(n\right)}{n^{\sigma_A + i t}}\bigg)\left|R\left(t\right) \right|^2 \Phi \left(\frac{t \log{T}}{T}\right) \mathrm{d}t, \\
	I_1 \left(R,T\right)=&\int_{-\infty}^{+\infty} \left|R\left(t\right) \right|^2 \Phi \left(\frac{t \log{T}}{T}\right) \mathrm{d}t.
\end{align*}
Plainly, we have
\begin{equation}
	\label{final2}
	\max_{T^{\beta} \leqslant t \leqslant T} \operatorname{Re} \bigg(\sum_{n \leqslant Y} \frac{\Lambda \left(n\right)}{n^{\sigma_A + i t}}\bigg) \geqslant \frac{M_2 \left(R,T\right)}{M_1 \left(R,T\right)}.
\end{equation}
\par
Clearly, we have the following crude bound by the definition of $Y$
\begin{equation}
	\label{ReL}
	\bigg| \operatorname{Re} \bigg(\sum_{n \leqslant Y} \frac{\Lambda \left(n\right)}{n^{\sigma_A + i t}}\bigg) \bigg| \leqslant \sum_{n \leqslant Y} \frac{\Lambda \left(n\right)}{n^{\sigma_A}}
   \ll Y \ll T^{o \left(1\right)}.
\end{equation}
By \eqref{Rt}, \eqref{ReL} and the definition of the function $\Phi ,$ we immediately obtain two upper bounds on partial integrals of $I_2 \left(R,T\right)$
\begin{align*}    
	&\bigg| \int_{\left| t \right| \leqslant T^{\beta}} \operatorname{Re} \bigg(\sum_{n \leqslant Y} \frac{\Lambda \left(n\right)}{n^{\sigma_A + it}}\bigg) \left| R \left(t\right)\right|^2 \Phi \left(\frac{t \log{T}}{T}\right) \mathrm{d}t \bigg| \ll T^{\beta + 2A \kappa + o\left(1\right)} ,\\
	&\bigg| \int_{\left| t \right| \geqslant T} \operatorname{Re} \bigg(\sum_{n \leqslant Y} \frac{\Lambda \left(n\right)}{n^{\sigma_A + it}}\bigg) \left| R \left(t\right)\right|^2 \Phi \left(\frac{t \log{T}}{T}\right) \mathrm{d}t \bigg| \ll 1 .
\end{align*}
\par
As a result, we find the following approximate formula
\begin{equation}
	\label{M2I2}
   2 M_2 \left(R,T\right) = I_2 \left(R,T\right) + O \left(T^{\beta + 2 A \kappa + o \left(1\right)}\right).
\end{equation}
Similarly, 
\begin{equation}
	\label{M1I1}
	2 M_1 \left(R,T\right) = I_1 \left(R,T\right) + O \left(T^{\beta + 2 A \kappa + o \left(1\right)}\right).
\end{equation}
\par
To give a lower bound for $I_1 \left(R,T\right),$ we expand the product of $R\left(t\right).$ Since the Fourier transform $\hat{\Phi}$ is always positive, we can only consider the contributions from the diagonal terms 
and drop off-diagonal terms: 
$$	I_1 \left(R,T\right)  = \int_{-\infty}^{+ \infty} \sum_{n,m \in \mathbb{N}} r\left(n\right) r\left(m\right) \left(\frac{n}{m}\right)^{i t} \Phi\left(\frac{t\log{T}}{T}\right)\mathrm{d}t 
 \geqslant \sqrt{2 \pi} \frac{T}{\log{T}} \sum_{n \in \mathbb{N}} r^{2} \left(n\right). $$
By properties of the multiplicative function $r\left(n\right)$, we have $r\left(1\right) = 1.$ 
Dropping all terms except $n = 1$ in the summation, we immediately get the following crude lower bound
$$I_1 \left(R,T\right) \geqslant \sqrt{2 \pi} \frac{T}{\log{T}} \sum_{n \in \mathbb{N}} r^{2} \left(n\right) \geqslant \sqrt{2 \pi} \frac{T}{\log{T}} \gg T^{1 + o\left(1\right)}.$$
\par
To make the error terms small, we require $\kappa$ is a positive number satisfying
\begin{equation}
	\label{kappa1}
	\beta + 2 A \kappa < 1.
\end{equation}
By \eqref{M2I2}, \eqref{M1I1}, $I_1 \left(R,T\right)\gg T^{1+ o \left(1\right)}$ and the trivial relation that $M_1 \left(R,T\right) \leqslant I_1 \left(R,T\right),$ we obtain 
\begin{equation}
	\label{M2M1I2I1}
	\frac{M_2 \left(R,T\right)}{M_1 \left(R,T\right)} \geqslant \frac{I_2 \left(R,T\right)}{I_1 \left(R,T\right)} + O \left(T^{\beta + 2 A \kappa - 1 + o \left(1\right)}\right).
\end{equation}
\par
Now we get
\begin{equation}
	\label{final4}
	\max_{T^{\beta} \leqslant t \leqslant T} \operatorname{Re} \bigg(\sum_{n \leqslant Y} \frac{\Lambda \left(n\right)}{n^{\sigma_A + i t}}\bigg) \geqslant \frac{I_2 \left(R,T\right)}{I_1 \left(R,T\right)} + O \left(T^{\beta + 2 A \kappa - 1 + o \left(1\right)}\right)
\end{equation}
by \eqref{final2} and \eqref{M2M1I2I1}.
\par
We still want to obtain the lower bound for ${I_2 \left(R,T\right)},$ so we compute $I_2 \left(R,T\right)$ directly
\begin{align*}
	I_2 \left(R,T\right) &=\int_{-\infty}^{+\infty} \operatorname{Re} \bigg(\sum_{n \leqslant Y} \frac{\Lambda \left(n\right)}{n^{\sigma_A + i t}}\bigg)\left|R\left(t\right) \right|^2 \Phi \left(\frac{t \log{T}}{T}\right) \mathrm{d}t \\
                         &= \sum_{p^{v} \leqslant Y} \frac{\log{p}}{p^{v \sigma_A}} \operatorname{Re} \bigg(\int_{- \infty}^{+ \infty} \sum_{n,m \in \mathbb{N}} r\left(n\right) r\left(m\right)\left(\frac{n}{p^v m}\right)^{it} \Phi \left(\frac{t \log{T}}{T}\right) \mathrm{d}t\bigg) \\
						 & \geqslant \sum_{p \leqslant X} \frac{\log{p}}{p^{\sigma_A}} \int_{- \infty}^{+ \infty} \sum_{\substack{k,m \in \mathbb{N} \\ n = pk }} r\left(n\right) r\left(m\right)\left(\frac{n}{p m}\right)^{it} \Phi \left(\frac{t \log{T}}{T}\right) \mathrm{d}t \\
						 &= \bigg(\sum_{p \leqslant X} \frac{\log{p}}{p^{\sigma_A}} r\left(p\right)\bigg) \int_{-\infty}^{+ \infty} \sum_{k,m \in \mathbb{N}} r\left(k\right) r\left(m\right)\left(\frac{k}{ m}\right)^{it} \Phi \left(\frac{t \log{T}}{T}\right) \mathrm{d}t \\
						 &= \bigg(\sum_{p \leqslant X} \frac{\log{p}}{p^{\sigma_A}} r\left(p\right)\bigg) I_1 \left(R,T\right)
\end{align*}
by the definitions of $\Lambda \left(n\right)$ and $I_1 \left(R,T\right).$ Thus, we have
\begin{equation}
	\label{final5}
	\frac{I_2 \left(R,T\right)}{I_1 \left(R,T\right)} \geqslant \sum_{p \leqslant X} \frac{\log{p}}{p^{\sigma_A}} r\left(p\right).
\end{equation}
\par
Now we compute the summation on the right-hand side of the above inequality by the prime number theorem and partial summation as $T \to \infty$
\begin{align*}
	\sum_{p \leqslant X} \frac{\log{p}}{p^{\sigma_A}} &=  \vartheta \left(X\right) X^{- \sigma_A} - \int_{2}^{X} \vartheta \left(t\right) \mathrm{d}\left(t^{-\sigma_A}\right) +O\left(1\right)  \\
	                                                  &=  X^{1-\sigma_A} + \sigma_A \int_{2}^{X} t^{-\sigma_A} \mathrm{d}t + O\left(\frac{X^{1-\sigma_A}}{\log{X}}\right) +O\left(\sigma_A \int_{2}^{X} \frac{t^{-\sigma_A}}{\log{t}} \mathrm{d}t\right) \\
													  &= \frac{1}{1 -\sigma_A} X^{1- \sigma_A} + O\left(\frac{X^{1-\sigma_A}}{\log{X}}\right) + O\left( \frac{\sigma_A}{1 - \sigma_A} \frac{X^{1-\sigma_A}}{\log{X}}\right) \\
													  &= \frac{1}{1 -\sigma_A} X^{1- \sigma_A} + O\bigg( \frac{1}{1 -\sigma_A} \frac{X^{1-\sigma_A}}{\log{X}}\bigg).
\end{align*}
To optimize the error terms, we use integration by parts. 
Furthermore, when $p \leqslant X,$  the value of $r\left(p\right)$ is independent of $p.$ So we have the following approximate formula
$$\sum_{p \leqslant X} \frac{\log{p}}{p^{\sigma_A}} r\left(p\right) = \left(\frac{1}{1 -\sigma_A} X^{1- \sigma_A} + O\left( \frac{1}{1 -\sigma_A} \frac{X^{1-\sigma_A}}{\log{X}}\right)\right)\left(1- X^{\sigma_A -1 } \right). $$
Recall that $X = \kappa \log{T} \log_2 T,$ for all sufficiently large $T,$ we obtain
$$\sum_{p \leqslant X} \frac{\log{p}}{p^{\sigma_A}} r\left(p\right) = \frac{e^A-1}{A} \log_2 T + O\left(\frac{\log_2 T}{\log_3 T }\right)$$
from the above asymptotic formula by directly calculation. By \eqref{final5}, we have that
\begin{equation}
	\label{final6}
	\frac{I_2 \left(R,T\right)}{I_1 \left(R,T\right)} \geqslant \frac{e^A-1}{A} \log_2 T + O\left(\frac{\log_2 T}{\log_3 T }\right).
\end{equation}
\par
By \eqref{meas bad set}, \eqref{Rt} and \eqref{ReL}, for the integrand of $M_2 \left(R,T\right),$ the integration over $\mathcal{B} \left(T\right)$ is at most 
$$\int_{\mathcal{B} \left(T\right)} \operatorname{Re} \bigg(\sum_{n \leqslant Y} \frac{\Lambda \left(n\right)}{n^{\sigma_A + i t}}\bigg)\left|R\left(t\right) \right|^2 \Phi \left(\frac{t \log{T}}{T}\right) \mathrm{d}t \ll T ^ {2 A \kappa + \frac{3\left(1-\sigma_A + \epsilon\right)}{2-\sigma_A + \epsilon}+ o\left(1\right)}.$$
If we require 
\begin{equation}
	\label{kappa2}
	2 A \kappa + \frac{3\left(1-\sigma_A + \epsilon\right)}{2-\sigma_A + \epsilon} < 1,
\end{equation}
this contribution will be negligible.
\par
Let $\kappa $ satisfies \eqref{kappa1} and \eqref{kappa2}, combining \eqref{final1}, \eqref{final4} and \eqref{final6}, we have 
$$\max_{T^{\beta} \leqslant t \leqslant T} - \operatorname{Re} \frac{\zeta ^{\prime} }{\zeta }\left( \sigma_{A} + it\right) \geqslant \frac{e^{A} -1}{A} \log_2 T + O\left(\frac{\log_2 {T}}{\log_3 {T}}\right) .$$
Now the proof of Theorem \ref{thm1} is completed.

\begin{remark}
	\label{remark1}
	In view of the definition of $\sigma_A,$ we find that $\sigma_A$ is close to the 1-line very much. 
	Specifically, $\left|\sigma_A -1  \right| \ll 1 / \log_2 T$ as $T \to \infty.$
	So we can use other more precise zero-density results. For instance, by {\rm \cite[Theorem 11.4]{ivic2012riemann}}
	$$N\left(\sigma, T\right) \ll T^{\frac{6\left(1-\sigma\right)}{5\sigma -1 }}, ~ \frac{13}{17} \leqslant \sigma \leqslant 1$$
    and 
	$$N\left(\sigma, T\right) \ll T^{2-2\sigma}, ~ \frac{11}{14} \leqslant \sigma \leqslant 1 .$$
    Using the above results, we can optimize the upper bound for the measure of $\mathfrak{B} \left(T\right) $, but there is no effect on our Theorem \ref{thm1}.
\end{remark}

\section{Proof of Theorem \ref{thm2}}
\label{proofthm2}
In fact, the proof is a slightly modification of the proof of Theorem \ref{thm1}. Set $Y= \left(\log{T}\right)^{20/\epsilon}$ and $X=\kappa \log {T} \log_2 T$ as in Section \ref{proofthm1}. 
Assuming RH, we can neglect the contribution from $\mathcal{B} \left(T\right),$ and immediately establish the following approximate formula from \eqref{final1}
\begin{equation}
	\label{RH approximate}
	- \frac{\zeta ^{\prime}}{\zeta} \left(\sigma_A + i t\right)= \sum_{n \leqslant Y} \frac{\Lambda \left(n\right)}{n^{\sigma_A + i t}} +O\left(\left(\log{T}\right)^{-18}\right),
	~ \forall t \in [T^{\beta}, T].
\end{equation}
Set 
$$\kappa = \frac{1- \beta}{2A} \operatorname{exp} \left(-x + \left(\log {T}\right)^{-E} + f\left(T\right)\right),$$
where $f\left(T\right) = \operatorname{exp}\left(- \left(\log_2 T \right)\right),$ and $E$ is slightly smaller than 18. Obviously, for all sufficiently large $T,$ 
we obtain $\beta - 1 + 2A \kappa < \beta - 1 + \left(1- \beta \right)\operatorname{exp}\left(-\frac{1}{2}x\right) < 0 .$
\par
Define $J_x$ and $\widetilde{J_x}$ as follows
\begin{align*}
	J_x &= \frac{e^A- 1}{A} \log_3 T -x + \left(\log T\right)^{-E} + \frac{1}{2} f\left(T\right), \\
	\widetilde{J_x} & = \frac{e^A- 1}{A} \log_3 T -x + \left(\log T\right)^{-E}. 
\end{align*}
Then by \eqref{M2M1I2I1}, \eqref{final5} and \eqref{final6}, we have 
\begin{equation}
	\label{M2M1J}
	\frac{M_2 \left(R,T\right)}{M_1 \left(R,T\right)} \geqslant \frac{\log_2 T}{\log_3 T} \cdot J_x
\end{equation}
for all sufficiently large $T.$
\par
Define $V_x, W_x$ and $Z_x$ as follows 
\begin{align*}
	V_x & = \bigg\{t \in [T^{\beta} , T] \operatorname{:} \operatorname{Re} \bigg(\sum_{n \leqslant Y} \frac{\Lambda \left(n\right)}{n^{\sigma_A + i t}}\bigg) \leqslant \frac{\log_2 T}{\log_3 T} \cdot \widetilde{J_x}\bigg\} , \\
	W_x & = \bigg\{t \in [T^{\beta} , T] \operatorname{:} \operatorname{Re} \bigg(\sum_{n \leqslant Y} \frac{\Lambda \left(n\right)}{n^{\sigma_A + i t}}\bigg) > \frac{\log_2 T}{\log_3 T} \cdot  \widetilde{J_x}\bigg\} , \\
	Z_x & = \bigg\{t \in [T^{\beta} , T] \operatorname{:} - \operatorname{Re} \frac{\zeta ^{\prime}}{\zeta} \left(\sigma_A + i t\right) > \frac{\log_2 T}{\log_3 T} \cdot \left(\widetilde{J_x} - \left(\log{T}\right)^{-E}\right)\bigg\} .
\end{align*}
By the definitions, we have $V_x \cap W_x = \varnothing$ and $V_x \cup W_x = [T^{\beta}, T].$ Recall the definition of $M_2 \left(R, T\right),$ we have
\begin{align*}
	M_2\left(R,T\right) & = \int_{T^{\beta}}^{T} \operatorname{Re} \bigg(\sum_{n \leqslant Y} \frac{\Lambda \left(n\right)}{n^{\sigma_A + i t}}\bigg)\left|R\left(t\right) \right|^2 \Phi \left(\frac{t \log{T}}{T}\right) \mathrm{d}t \\
	                    & = \left(\int_{V_x} + \int_{W_x}\right) \operatorname{Re} \bigg(\sum_{n \leqslant Y} \frac{\Lambda \left(n\right)}{n^{\sigma_A + i t}}\bigg)\left|R\left(t\right) \right|^2 \Phi \left(\frac{t \log{T}}{T}\right) \mathrm{d}t \\
	                    & \leqslant \frac{\log_2 T}{\log_3 T} \cdot \widetilde{J_x} \cdot M_1\left(R,T\right) + \int_{W_x} \operatorname{Re} \bigg(\sum_{n \leqslant Y} \frac{\Lambda \left(n\right)}{n^{\sigma_A + i t}}\bigg)\left|R\left(t\right) \right|^2 \Phi \left(\frac{t \log{T}}{T}\right) \mathrm{d}t.
\end{align*}
Combining the above computation with \eqref{M2M1J}, we have
\begin{equation}
	\label{1/2f}
	\frac{\log_2 T}{2 \log_3 T} f\left(T\right) \cdot M_1\left(R,T \right) \leqslant \int_{W_x} \operatorname{Re} \bigg(\sum_{n \leqslant Y} \frac{\Lambda \left(n\right)}{n^{\sigma_A + i t}}\bigg)\left|R\left(t\right) \right|^2 \Phi \left(\frac{t \log{T}}{T}\right) \mathrm{d}t.
\end{equation}
\par 
We have already obtained $\left| R \left(t\right)\right| ^ 2 \leqslant T^{2A \kappa + o\left(1\right)}$ from \eqref{Rt}. Meanwhile, by the prime number theorem, we have
$$	\bigg| \operatorname{Re} \bigg(\sum_{n \leqslant Y} \frac{\Lambda \left(n\right)}{n^{\sigma_A + i t}}\bigg) \bigg| \leqslant \sum_{n \leqslant Y} \frac{\Lambda \left(n\right)}{n^{\sigma_A}} \ll \log {Y} \ll \log_2 T.$$
By above estimates and \eqref{RH approximate}, we get
\begin{equation}
	\label{measWx}
	\int_{W_x} \operatorname{Re} \bigg(\sum_{n \leqslant Y} \frac{\Lambda \left(n\right)}{n^{\sigma_A + i t}}\bigg)\left|R\left(t\right) \right|^2 \Phi \left(\frac{t \log{T}}{T}\right) \mathrm{d}t 
	\leqslant \left(\log_2 {T}\right) T^{2A\kappa + o\left(1\right)} \operatorname{meas}\left(W_x\right).
\end{equation}
Combining \eqref{M1I1} and $I_1\left(R,T\right) \geqslant T^{1+o\left(1\right)},$ we obtain $M_1 \left(R,T\right) \geqslant T^{1+o \left(1\right)}$
because $\kappa$ satisfies $\beta - 1 + 2A \kappa < 0.$ Furthermore, by \eqref{1/2f} and \eqref{measWx}, we have
$$\operatorname{meas}\left(W_x\right) \geqslant \frac{\frac{1}{2}f\left(T\right)\log_2 {T} \cdot M_1\left(R,T\right)}{\left(\log_2 T \log_3 T\right) T^{2A\kappa + o\left(1\right)}}
\geqslant T^{\left(1+o\left(1\right)\right)\left(1-2A\kappa\right)}.$$
\par 
By the definition of $\kappa,$ we have $\operatorname{meas}\left(W_x\right) \geqslant T^{1-\left(1-\beta\right)e^{-x}+o\left(1\right)}.$ As $W_x$ is a subset of $Z_x,$ we get $\operatorname{meas}\left(Z_x\right) \geqslant \operatorname{meas}\left(W_x\right).$
So we show that Theorem \ref{thm2} holds. 

\section{Proof of Theorem \ref{thm3}}
\label{proofthm3}
We denote by $G_q$ the set of Dirichlet characters $\chi  \left(\operatorname{mod}~q\right).$ Long before, Gronwall and Titchmarsh (for instance, see \cite[Theorem 11.3]{montgomery2007multiplicative}) had given a classical result, which is about the zero-free region on Dirichlet $L$-functions. 
They proved that there exists a constant $ C > 0,$ such that for any $\chi \in G_q,$ the
region 
$$\bigg\{s=\sigma + it \operatorname{:} \sigma > 1 - \frac{C}{\log{\left(q \left( \left|t \right| + 2\right)\right)}}\bigg\}$$
is free of zeros of $L \left(s, \chi \right),$ unless $\chi = \chi_e$ is a quadratic character. In this case, $L \left(s, \chi \right)$ has at most one zero in this region.
We define $G_q ^{\ast} = G_q \setminus \{\chi _0, \chi _e\}.$
\par
Similar to Section \ref{proofthm1}, we need the zero-density result of $L \left(s, \chi \right).$ 
Let $N\left(\sigma, T, \chi \right)$ denote the number of zeros of $L \left(s,\chi \right)$ inside the rectangle $\{s: \operatorname{Re}\left(s\right) \geqslant \sigma, \left|\operatorname{Im} \left(s\right) \right| \leqslant T\}.$
For $\frac{1}{2} \leqslant \sigma_0 \leqslant 1$ and $T \geqslant 2,$ Montgomery \cite{Montgomery1971} obtained a similar zero-density result to Ingham's result on Dirichlet $L$-functions as follows
$$\sum_{\chi  \in G_q}N\left(\sigma_0, T, \chi \right) \ll \left(qT\right)^{\frac{3\left(1-\sigma_0\right)}{2-\sigma_0}}\log ^ {14} \left(qT\right).$$
\par
Fix $\epsilon \in \left(0,\sigma_A - \frac{1}{2}\right).$ We set $t = 0,~ \sigma_0 = \sigma_A - \epsilon / 2,$ $\sigma_1 = \sigma_0 + 1/\log{Y}$  
and $Y=\left(\log {q}\right)^{20/\epsilon}$ in Lemma \ref{approximateL}. Taking $T=Y+2$ and combining Montgomery's zero-density result, we establish the following approximate formula
\begin{equation}
	\label{finalq1}
	- \frac{L^{\prime}}{L} \left(\sigma_A, \chi\right)= \sum_{n \leqslant Y} \frac{\Lambda \left(n\right)\chi \left(n \right)}{n^{\sigma_A}} +O\left(\left(\log{q}\right)^{-18}\right),
	~ \forall \chi  \in G_q ^{\ast} \setminus B_q,
\end{equation}
where $B_q$ is a set of ``bad'' characters with cardinality satisfying
\begin{equation}
	\label{meas bad setq}
	\# B_q \ll q^{\frac{3\left(1-\sigma_A + \frac{1}{2}\epsilon\right)}{2-\sigma_A +\frac{1}{2}\epsilon}} \left(\log{q}\right)^{O\left(1\right)}.
\end{equation}
\par
Let $X = \kappa \log{q} \log_2{q}.$ And we let $r\left(n\right)$ be the same function as in Section \ref{proofthm1}. Meanwhile, we define $R\left(\chi\right) = \sum_{n \in \mathbb{N}} r\left(n\right) \chi \left(n\right).$
By the prime number theorem, we can obtain the following bound similar to Section \ref{proofthm1}
\begin{equation}
	\label{Rchi}
	\left| R\left(\chi\right) \right| ^2 \leqslant q^{2 A \kappa + o\left(1\right)}.
\end{equation}
\par
Define $S_2 \left(R,q\right)$, $S_1 \left(R,q\right)$, $S_2 ^{\ast} \left(R,q\right)$ and $S_1 ^{\ast} \left(R,q\right)$ as follows
\begin{align*}
	S_2 \left(R,q\right) &= \sum_{\chi \in G_q} \operatorname{Re} \bigg(\sum_{n \leqslant Y}\frac{\Lambda \left(n\right)\chi \left(n\right)}{n^{\sigma_A}}\bigg) \left|R\left(\chi \right) \right| ^2, \\
    S_1 \left(R,q\right) &= \sum_{\chi \in G_q} \left|R\left(\chi \right) \right| ^2, \\
	S_2 ^{\ast} \left(R,q\right) &= \sum_{\chi \in G_q^{\ast } \setminus B_q} \operatorname{Re} \bigg(\sum_{n \leqslant Y}\frac{\Lambda \left(n\right) \chi \left(n\right)}{n^{\sigma_A}} \bigg) \left|R\left(\chi \right) \right| ^2, \\
	S_1 ^{\ast} \left(R,q\right) &= \sum_{\chi \in G_q^{\ast } \setminus B_q} \left|R\left(\chi \right) \right| ^2.
\end{align*}
Plainly, we have
\begin{equation}
	\label{finalq2}
	\max_{\chi \in G_q^{\ast } \setminus B_q} \operatorname{Re} \bigg(\sum_{n \leqslant Y} \frac{\Lambda \left(n\right)\chi \left(n\right)}{n^{\sigma_A}}\bigg) \geqslant \frac{S_2 ^{\ast} \left(R,q\right)}{S_1 ^{\ast} \left(R,q\right)}.
\end{equation}
\par
Meanwhile, we have
\begin{equation}
	\label{ReLchi}
	\operatorname{Re} \bigg(\sum_{n \leqslant Y}\frac{\Lambda \left(n\right)\chi \left(n\right)}{n^{\sigma_A}}\bigg) \ll q^{o\left(1\right)}.
\end{equation}
By \eqref{meas bad setq}, \eqref{Rchi} and \eqref{ReLchi}, we obtain the following approximate formulas
\begin{align*}
	S_2 \left(R,q\right) &= S_2^{\ast} \left(R,q\right) + O\left(q^{2 A \kappa + \frac{3\left(1- \sigma_A + \epsilon\right)}{2- \sigma_A + \epsilon} + o\left(1\right)}\right), \\
	S_1 \left(R,q\right) &= S_1^{\ast} \left(R,q\right) + O\left(q^{2 A \kappa + \frac{3\left(1- \sigma_A + \epsilon\right)}{2- \sigma_A + \epsilon} + o\left(1\right)}\right).
\end{align*}
\par
To give a lower bound for $S_1 \left(R,q\right),$ we make use of the orthogonality of Dirichlet characters and the fact that $r\left(1\right)= 1 :$
\begin{align*}
	S_1 \left(R,q\right) &= \sum_{\chi \in G_q} \left|R\left(\chi \right) \right| ^2 \\
	                     &= \sum_{n,k \in \mathbb{N}} r\left(n\right) r\left(k\right) \sum_{\chi \in G_q} \chi \left(n\right) \bar \chi \left(k\right) \\
						 &= \phi \left(q\right)\sum_{\substack{n,k \in \mathbb{N} \\ n \equiv k \left(\operatorname{mod}~q\right) \\ \operatorname{gcd}\left(k,q\right)=1}}  r\left(n\right) r \left(k\right) \\
						 & \geqslant \left(q - 1\right) \sum_{n \in \mathbb{N}} r^2 \left(n\right) \\
						 & \gg q^{1 + o\left(1\right)}.
\end{align*}
\par
Combining the above approximate formulas and the trivial relation $S_1^{\ast} \left(R,q\right) \leqslant S_1 \left(R,q\right),$ we get
\begin{equation}
	\label{S2S1starS2S1}
	\frac{S_2^{\ast}  \left(R,q\right)}{S_1^{\ast} \left(R,q\right)} \geqslant \frac{S_2\left(R,q\right)}{S_1 \left(R,q\right)} + O\left(q^{2 A \kappa + \frac{3\left(1- \sigma_A + \epsilon\right)}{2- \sigma_A + \epsilon} -1 + o\left(1\right)}\right).
\end{equation}
To make the error terms small, we require
\begin{equation}
	\label{kappa3}
	2 A \kappa + \frac{3\left(1- \sigma_A + \epsilon\right)}{2- \sigma_A + \epsilon} -1 < 0.
\end{equation}
\par
By \eqref{finalq2} and \eqref{S2S1starS2S1}, we get
\begin{equation}
	\label{finalq4}
    \max_{\chi \in G_q ^{\ast} \setminus B_q} \operatorname{Re} \bigg(\sum_{n \leqslant Y}\frac{\Lambda \left(n\right)\chi \left(n\right)}{n^{\sigma_A}}\bigg) \geqslant \frac{S_2 \left(R,q\right)}{S_1 \left(R,q\right)} + O\left(q^{2 A \kappa + \frac{3\left(1- \sigma_A + \epsilon\right)}{2- \sigma_A + \epsilon} -1 + o\left(1\right)}\right).
\end{equation}
We still want to obtain the lower bound for ${S_2 \left(R,q\right)}.$ We compute $S_2 \left(R,q\right)$ directly
\begin{align*}
	S_2 \left(R,q\right) &= \sum_{\chi \in G_q} \operatorname{Re} \bigg(\sum_{n \leqslant Y}\frac{\Lambda \left(n\right)\chi \left(n\right)}{n^{\sigma_A}}\bigg) \left|R\left(\chi \right) \right| ^2 \\
	                     &= \sum_{p^v \leqslant Y} \frac{\log{p}}{p^{v \sigma_A}} \operatorname{Re} \bigg(\sum_{n,m \in \mathbb{N}} r\left(n\right) r\left(m\right) \sum_{\chi \in G_q} \chi \left(p^v n\right) \bar\chi \left(m\right)\bigg) \\
	                     & \geqslant \sum_{p \leqslant Y} \frac{\log{p}}{p^{ \sigma_A}} \phi \left(q\right) \sum_{\substack{n,m \in \mathbb{N} \\ p n \equiv m \left(\operatorname{mod}~q\right) \\ \operatorname{gcd}\left(m,q\right)=1}}  r\left(n\right) r \left(m\right) \\
						 & \geqslant \bigg(\sum_{p \leqslant X} \frac{\log{p}}{p^{ \sigma_A}} r\left(p\right)\bigg) \phi \left(q\right) \sum_{\substack{n,k \in \mathbb{N} \\ p n \equiv pk \left(\operatorname{mod}~q\right) \\ \operatorname{gcd}\left(pk,q\right)=1}}  r\left(n\right) r \left(k\right).
\end{align*}
Since $q$ is a prime, by \cite{yang2024extreme} we have
$$\phi \left(q\right) \sum_{\substack{n,k \in \mathbb{N} \\ pn \equiv pk \left(\operatorname{mod}~q\right) \\ \operatorname{gcd}\left(pk,q\right)=1}}  r\left(n\right) r \left(k\right) = 
\phi \left(q\right) \sum_{\substack{n,k \in \mathbb{N} \\ n \equiv k \left(\operatorname{mod}~q\right) \\ \operatorname{gcd}\left(k,q\right)=1}}  r\left(n\right) r \left(k\right) = S_1 \left(R,q\right). $$
So we obtain
\begin{equation}
	\label{finalq5}
	\frac{S_2 \left(R,q\right)}{S_1 \left(R,q\right)} \geqslant \sum_{p \leqslant Y} \frac{\log{p}}{p^{ \sigma_A}} r\left(p\right).
\end{equation}
Recall $X = \kappa \log{q} \log_2{q},$ as $q \to \infty,$ we immediately get 
\begin{equation}
	\label{finalq6}
	\frac{S_2 \left(R,q\right)}{S_1 \left(R,q\right)} \geqslant \frac{e^A-1}{A} \log_2 q + O\left(\frac{\log_2 q}{\log_3 q}\right).
\end{equation}
\par
Let $\kappa$ satisfies \eqref{kappa3}, by \eqref{finalq1}, \eqref{finalq4} and \eqref{finalq6} we have
$$\max_{\substack{\chi \neq \chi_{0} \\ \chi \left(\operatorname{mod}~ q\right)}} - \operatorname{Re} \frac{L^{\prime}}{L}\left(\sigma_{A},\chi \right) \geqslant \frac{e^{A} -1}{A} \log_2 q + O\left(\frac{\log_2 {q}}{\log_3 {q}}\right).$$
Now the proof of Theorem \ref{thm3} is completed.

\section{Proof of Theorem \ref{thm4}}
\label{proofthm4}
Again, the proof is a slightly modification of the proof of Theorem \ref{thm3} and is similar as the proof of Theorem \ref{thm2}.
Set $Y= \left(\log{q}\right)^{20/\epsilon}$ and $X=\kappa \log {q} \log_2 q$ as in Section \ref{proofthm3}.
Assuming GRH, we can neglect the contribution from $B_q,$ so we immediately establish the following approximate formula from \eqref{finalq1}
\begin{equation}
	\label{GRH approximate}
	- \frac{L^{\prime}}{L} \left(\sigma_A, \chi\right)= \sum_{n \leqslant Y} \frac{\Lambda \left(n\right)\chi \left(n \right)}{n^{\sigma_A}} +O\left(\left(\log{q}\right)^{-18}\right),
	~ \forall \chi  \in G_q ^{\ast}.
\end{equation}
Meanwhile, we can rewrite the definitions of $S_2 ^{\ast} \left(R,q\right)$ and $S_1 ^{\ast} \left(R,q\right)$ as follows
\begin{align*}
	S_2 ^{\ast} \left(R,q\right) &= \sum_{\chi \in G_q^{\ast }} \operatorname{Re} \bigg(\sum_{n \leqslant Y}\frac{\Lambda \left(n\right)\chi \left(n\right)}{n^{\sigma_A}}\bigg) \left|R\left(\chi \right) \right| ^2, \\
	S_1 ^{\ast} \left(R,q\right) &= \sum_{\chi \in G_q^{\ast }} \left|R\left(\chi \right) \right| ^2.
\end{align*}
\par
We derive $\left| R\left(\chi\right) \right| ^2 \leqslant q^{2 A \kappa + o\left(1\right)}$ from \eqref{Rchi}, and  
\begin{equation}
	\label{Rchi1}
	\operatorname{Re} \bigg(\sum_{n \leqslant Y} \frac{\Lambda \left(n\right) \chi \left(n\right)}{n^{\sigma_A}}\bigg) \leqslant \sum_{n \leqslant Y} \frac{\Lambda \left(n\right)}{n^{\sigma_A}} \ll \log {Y} \ll \log_2 q
\end{equation}
by the prime number theorem. 
Naturally, we find that $S_1 \left(R,q\right) = S_1 ^{\ast} \left(R,q\right) + O\left(q^{2A \kappa + o\left(1\right)}\right)$ and $S_1 \left(R,q\right) \geqslant q ^{1+o \left(1\right)}.$
\par
We set $$\kappa = \frac{1}{2A} \operatorname{exp} \left(-x + \left(\log {q}\right)^{-E} + f\left(q\right)\right),$$
where $f\left(q\right) = \operatorname{exp}\left(- \left(\log_2 q \right)\right),$ and $E$ is slightly smaller than 18. Obviously, as $q \to \infty,$ 
we have $-1 + 2A \kappa < -1 + \operatorname{exp}\left(-\frac{1}{2}x\right) < 0 .$
\par
Define $J_x$ and $\widetilde{J_x}$ as follows
\begin{align*}
	J_x &= \frac{e^A- 1}{A} \log_3 q -x + \left(\log q\right)^{-E} + \frac{1}{2} f\left(q\right), \\
	\widetilde{J_x} & = \frac{e^A- 1}{A} \log_3 q -x + \left(\log q\right)^{-E}. 
\end{align*}
Then by the new definitions of $S_2^{\ast}  \left(R,q\right), S_1^{\ast}  \left(R,q\right),$  \eqref{finalq5} and \eqref{finalq6}, we obtain that 
\begin{equation}
	\label{S2S1J}
	\frac{S_2^{\ast}  \left(R,q\right)}{S_1^{\ast}  \left(R,q\right)} \geqslant \frac{\log_2 q}{\log_3 q} \cdot J_x
\end{equation}
as $q \to \infty.$
\par
Define $V_x, W_x$ and $Z_x$ as follows 
\begin{align*}
	V_x & = \bigg\{\chi \in G_q^{\ast } \operatorname{:} \operatorname{Re} \bigg(\sum_{n \leqslant Y} \frac{\Lambda \left(n\right) \chi \left(n\right)}{n^{\sigma_A}}\bigg) \leqslant \frac{\log_2 q}{\log_3 q} \cdot \widetilde{J_x}\bigg\} , \\
	W_x & = \bigg\{\chi \in G_q^{\ast } \operatorname{:} \operatorname{Re} \bigg(\sum_{n \leqslant Y} \frac{\Lambda \left(n\right) \chi \left(n\right)}{n^{\sigma_A}}\bigg) > \frac{\log_2 q}{\log_3 q} \cdot  \widetilde{J_x}\bigg\} , \\
	Z_x & = \bigg\{\chi \in G_q^{\ast } \operatorname{:} - \operatorname{Re} \frac{L^{\prime}}{L} \left(\sigma_A, \chi\right) > \frac{\log_2 q}{\log_3 q} \cdot \left(\widetilde{J_x} - \left(\log{T}\right)^{-E}\right)\bigg\} .
\end{align*}
Then we have  
\begin{align*}
	S_2^{\ast}  \left(R,q\right) & = \bigg(\sum_{\chi \in V_x} + \sum_{\chi \in W_x}\bigg) \operatorname{Re} \bigg(\sum_{n \leqslant Y}\frac{\Lambda \left(n\right)\chi \left(n\right)}{n^{\sigma_A}}\bigg) \left|R\left(\chi \right) \right| ^2\\
	                             & \leqslant \frac{\log_2 q}{\log_3 q} \cdot \widetilde{J_x} \cdot S_1^{\ast}  \left(R,q\right)  +  \sum_{\chi \in W_x} \operatorname{Re} \bigg(\sum_{n \leqslant Y}\frac{\Lambda \left(n\right)\chi \left(n\right)}{n^{\sigma_A}}\bigg) \left|R\left(\chi \right) \right| ^2.
\end{align*}
Combining the above computation with \eqref{S2S1J}, we have
\begin{equation}
	\label{1/2fq}
	\frac{\log_2 q}{2 \log_3 q} f\left(q\right) \cdot S_1^{\ast}  \left(R,q\right)  \leqslant \sum_{\chi \in W_x} \operatorname{Re} \bigg(\sum_{n \leqslant Y}\frac{\Lambda \left(n\right)\chi \left(n\right)}{n^{\sigma_A}}\bigg) \left|R\left(\chi \right) \right| ^2.
\end{equation}
By \eqref{Rchi}, \eqref{GRH approximate} and \eqref{Rchi1}, we obtain
\begin{equation}
	\label{jingWx}
	\sum_{\chi \in W_x} \operatorname{Re} \bigg(\sum_{n \leqslant Y}\frac{\Lambda \left(n\right)\chi \left(n\right)}{n^{\sigma_A}}\bigg) \left|R\left(\chi \right) \right| ^2
	\leqslant \left(\log_2 {q}\right) q^{2A\kappa + o\left(1\right)} \#\left(W_x\right).
\end{equation}
\par
Combining $S_1 \left(R,q\right) = S_1 ^{\ast} \left(R,q\right) + O\left(q^{2A \kappa + o\left(1\right)}\right)$ and $S_1 \left(R,q\right) \geqslant q ^{1+o \left(1\right)},$ we obtain $S_1 ^{\ast} \left(R,q\right) \geqslant q^{1+o \left(1\right)}$
because $\kappa$ satisfies $- 1 + 2A \kappa < 0.$ Furthermore, by \eqref{1/2fq} and \eqref{jingWx}, we get 
$$\#W_x \geqslant \frac{\frac{1}{2}f\left(q\right)\log_2 {q} \cdot S_1 ^{\ast} \left(R,q\right)}{\left(\log_2 q \log_3 q\right) q^{2A\kappa + o\left(1\right)}}
\geqslant q^{\left(1+o\left(1\right)\right)\left(1-2A\kappa\right)}.$$
\par 
By the definition of $\kappa,$ we know $\#W_x \geqslant q^{1-\left(1-\beta\right)e^{-x}+o\left(1\right)}.$ As $W_x$ is a subset of $Z_x,$ we get $\#Z_x \geqslant \#W_x.$
So we show that Theorem \ref{thm4} holds.

\bibliographystyle{siam}
\bibliography{2asset}
\end{document}